\newcommand{\mC}{{\mathbb C}}
\newcommand{\mR}{{\mathbb R}}
\newcommand{\mN}{{\mathbb N}}
\newcommand{\Om}{\Omega}
\newcommand{\bOm}{b\Omega}
\newcommand{\bD}{b\Delta}
\newcommand{\opA}{\mathcal{A}}
\newcommand{\opC}{\mathcal{C}}
\newcommand{\opE}{\mathcal{E}}
\newcommand{\opH}{\mathcal{H}}
\newcommand{\opI}{\mathcal{I}}
\newcommand{\opK}{\mathcal{K}}
\newcommand{\opL}{\mathcal{L}}
\newcommand{\opM}{\mathcal{M}}
\newcommand{\opS}{\mathcal{S}}
\newcommand{\opT}{\mathcal{T}}
\newcommand{\rar}{\rightarrow}
\newcommand{\ov}{\overline}
\newcommand{\eps}{\varepsilon}
\newcommand{\ds}{\displaystyle}
\newcommand{\Real}{\mathrm{Re\,}}  
\newcommand{\Imag}{\mbox{Im\,}}  
\newcommand{\sze}{Szeg\H{o}}
\newcommand{\perparrow}{\ensuremath{\stackrel{\tiny \perp}{\rightarrow}}}
\newcommand{\eqdef}{\overset{ \text{def} }{=}} 
\newcommand{\vp}{\varphi}
\DeclareMathOperator{\A}{\mathcal A}
\DeclareMathOperator{\F}{\mathcal F}
\newcommand{\R}{\mathbb{R}}
\DeclareMathOperator{\Ran}{Ran}
\DeclareMathOperator{\ess}{ess}
\DeclareMathOperator{\essinf}{\ess\inf}
\DeclareMathOperator{\esssup}{\ess\sup}
\DeclareMathOperator{\Ker}{Ker}
\DeclareMathOperator{\supp}{supp}
\newtheorem{thrm}{Theorem}
\newtheorem{lemma}{Lemma}
\newtheorem{prop}{Proposition}
\begin{document}

\title[Kerzman-Stein operator for piecewise differentiable regions]{The Kerzman-Stein operator for piecewise continuously differentiable regions}
\date{\today}

\author{Michael Bolt}
\thanks{The first author is partially supported by the National Science Foundation under Grant No.~DMS-1002453 and by Calvin College through a Calvin Research Fellowship.}
\address{Department of Mathematics and Statistics\\1740 Knollcrest Circle SE\\Calvin College\\Grand Rapids, Michigan 49546-4403 USA}
\email{mbolt@calvin.edu}

\author{Andrew Raich}
\thanks{The second author is partially supported by  the National Science Foundation under Grant No.~DMS-0855822.}
\address{Department of Mathematical Sciences\\SCEN 327\\1 University of Arkansas\\Fayetteville, Arkansas 72701 USA}
\email{araich@uark.edu}

\begin{abstract}
The Kerzman-Stein operator is the skew-hermitian part of the Cauchy operator defined with respect to an unweighted hermitian inner product on a rectifiable curve.
If the curve is continuously differentiable, the Kerzman-Stein operator is compact on the Hilbert space of square integrable functions; when there is a corner, the operator is noncompact.
Here we give a complete description of the spectrum for a finite symmetric wedge and we show how this reveals the essential spectrum for curves that are piecewise continuously differentiable.
We also give an explicit construction for a smooth curve whose Kerzman-Stein operator has large norm, and we demonstrate the variation in norm with respect to a continuously differentiable perturbation. 
\end{abstract}
\maketitle

%
%

\section{Introduction}
For a smooth region $\Om\subset\subset\mC$, Kerzman and Stein studied in \cite{kerzmanstein} a certain compact operator $\opA$ in relation to the Cauchy projection and \sze\ projection. Let $\opC$ be the Cauchy transform for $\Om$, defined for an integrable function $f$ on $\bOm$ according to
\[
\opC f(z) = \frac{1}{2\pi i} \int_{\bOm} \frac{f(w) \, dw}{w-z} \,\mbox{ for }\, z\in\Om,
\]
and using a nontangential limit of the integral when $z\in\bOm$.  In case the boundary is twice differentiable, it is classical that $\opC$ is a bounded projection from $L^2(\bOm)$ onto the Hardy space $H^2(\bOm)$.  Moreover, the operator $\opA = \opC - \opC^*$ is compact and the \sze\ projection can be written explicitly via $\opS = \opC(\opI+\opA)^{-1}$. The \sze\ projection is the orthogonal projection of $L^2(\bOm)$ to $H^2(\bOm)$. See also Bell~\cite{bell}.

In \cite{lanzani}, Lanzani took important steps to extend the theory to regions with irregular boundary.  In particular, she showed that for regions with continuously differentiable boundary the Kerzman-Stein operator remains compact, and for regions with Lipschitz boundary the Kerzman-Stein equation still holds.  At issue is the extent to which regularity of the boundary leads to a cancellation of singularities in the kernel when $\opA$ is expressed as an integral operator,
\begin{equation}
\opA f(z) =
 \frac{1}{2\pi i} \, P.V. \int_{\bOm}
  \left( \frac{T(w)}{w-z} -\frac{\ov{T(z)}}{\ov w-\ov z} \right)\!
f(w) \, ds_w.
\label{eq-intro1}
\end{equation}
Here $T(w)\in \mC$ is the unit tangent vector at $w\in\bOm$ and $ds = ds_w$ is arc length measure for $\bOm$.   Using \eqref{eq-intro1}, we 
can define the Kerzman-Stein operator even for curves that are not closed. 

The goal of the present work is to give detailed spectral information regarding $\opA$ in order to quantify the extent to which the Cauchy projection fails to be orthogonal.  
Since $\opA$ is compact (and skew-hermitian) for regions with continuously differentiable boundary, its spectrum $\sigma(\opA)$ comprises a countable number of eigenvalues whose only possible accumulation point is $0$. 
If the boundary has a corner, however, then $\opA$ is noncompact as is illustrated by an example of Lanzani~\cite[p.547]{lanzani}.

Our first result describes completely the spectrum for a bounded symmetric wedge.  
\begin{thrm}
\label{thm:R1}   
Let $0<\theta<\pi$ be fixed and $W_\theta = \{ r e^{\pm i\theta}: 0\leq r <1 \}$. If 
\[
\vp(\xi) = \frac{\sinh[\xi(\pi-2\theta)]}{\cosh (\xi\pi)},
\]
then the spectrum of the Kerzman-Stein operator, denoted $\sigma(\A)$, is purely continuous and
\[
\sigma(\A) = i [\inf\vp,\sup\vp] = i [-\sup\vp,\sup\vp] .
\]
\end{thrm}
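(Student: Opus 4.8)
The plan is to diagonalize $\A$ by first passing to the two rays making up $W_\theta$ and then exploiting the scaling invariance of the corner. First I would parametrize each of the two segments by arc length, identifying $L^2(W_\theta,ds)$ with $L^2([0,1))\oplus L^2([0,1))$. Along a single straight segment the unit tangent is constant, so the two terms in the kernel of \eqref{eq-intro1} cancel identically (this is the classical fact that the Cauchy transform of a line is self-adjoint), and hence the two diagonal blocks of $\A$ are zero. Writing the remaining off-diagonal block as $K$ and using that $\A$ is skew-hermitian, one gets $\A\cong\begin{pmatrix}0&K\\-K^{*}&0\end{pmatrix}$, and a direct computation with $z=xe^{i\theta}$ on one ray and $w=ye^{-i\theta}$ on the other presents $K$ (up to orientation signs) as the integral operator on $L^2([0,1))$ with kernel
\[
k(x,y)=\frac{1}{2\pi i}\left(\frac{1}{y-xe^{2i\theta}}-\frac{1}{x-ye^{2i\theta}}\right),
\]
which has no singularity for $x,y>0$, is antisymmetric in $(x,y)$, and is homogeneous of degree $-1$.

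The degree $-1$ homogeneity says $K$ intertwines the dilations of $(0,\infty)$, so the unitary change of variables $x=e^{-s}$ (that is, $f\mapsto f(e^{-s})e^{-s/2}$, which identifies $L^2([0,1),dx)$ with $L^2((0,\infty),ds)$) carries $K$ to the truncated convolution $G\mapsto\int_0^\infty\kappa(s-t)\,G(t)\,dt$ with $\kappa(u)=k(e^{-u},1)e^{-u/2}$; antisymmetry of $k$ makes $\kappa$ odd, and one checks $\kappa\in L^1(\R)$ for every $\theta\in(0,\pi)$ (the singularities of $k(\cdot,1)$ lie on the unit circle, off the positive axis, and $\kappa$ decays like $e^{-|u|/2}$), so the symbol $a:=\widehat\kappa$ lies in $C_0(\R)$. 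Next I would compute this symbol: since $a(\xi)=\int_0^\infty k(x,1)\,x^{\,i\xi-1/2}\,dx$ is the Mellin transform of a sum of two simple fractions, the classical identity $\int_0^\infty x^{s-1}(x+c)^{-1}\,dx=\pi c^{\,s-1}/\sin\pi s$ (for $0<\Rre s<1$, $c\notin(-\infty,0]$, principal branch) applies; the two denominators contribute $(\sin\pi(\tfrac12+i\xi))^{-1}=(\cosh\pi\xi)^{-1}$ and the two numerators assemble into $\sinh[\xi(\pi-2\theta)]$, giving $a(\xi)=e^{i(\pi-\theta)}\vp(\xi)$. Consequently $\A$ is unitarily equivalent to the block Wiener--Hopf operator $W(\Phi)$ on $L^2((0,\infty),\C^2)$ with continuous matrix symbol $\Phi(\xi)=\begin{pmatrix}0&a(\xi)\\-\ov{a(\xi)}&0\end{pmatrix}=\vp(\xi)\,M$, where $M=\begin{pmatrix}0&e^{i(\pi-\theta)}\\-e^{-i(\pi-\theta)}&0\end{pmatrix}$ is a fixed skew-hermitian matrix with eigenvalues $\pm i$.

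Since $M$ is a constant matrix, a $2\times2$ unitary $V$ with $V^{*}MV=\mathrm{diag}(i,-i)$ conjugates $W(\Phi)$ to $W(i\vp)\oplus W(-i\vp)=iW(\vp)\oplus(-i)W(\vp)$, where $W(\vp)$ is the scalar Wiener--Hopf operator with symbol $\vp$. Hence $\sigma(\A)=i\,\sigma(W(\vp))\cup(-i)\,\sigma(W(\vp))$, and $\A$ has an eigenvalue only if $W(\vp)$ does. Now $\vp$ is real-valued, so $W(\vp)$ is self-adjoint, which rules out residual spectrum; moreover $W(\vp)-\lambda=W(\vp-\lambda)$ equals its own adjoint when $\lambda\in\R$, so Coburn's lemma gives $\Ker W(\vp-\lambda)=\{0\}$ for every such $\lambda$ (whenever $\vp\not\equiv\lambda$, which holds for all $\lambda$ unless $\theta=\pi/2$, the degenerate case where $W_\theta$ is a straight segment and $\A=0$), so $W(\vp)$ has no point spectrum. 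Also $\|W(\vp)\|\le\|\vp\|_\infty=\sup\vp$ (using that $\vp$ is odd, so $\sup|\vp|=\sup\vp=-\inf\vp$), whence $\sigma(W(\vp))\subseteq[\inf\vp,\sup\vp]$; conversely $\vp$ is continuous with $\vp(\pm\infty)=0$, so its range is exactly the interval $[\inf\vp,\sup\vp]$, and for each $\lambda$ in it the symbol $\vp-\lambda$ vanishes somewhere, so $W(\vp-\lambda)$ is not Fredholm and hence not invertible. Therefore $\sigma(W(\vp))=[\inf\vp,\sup\vp]$ is purely continuous, and combining,
\[
\sigma(\A)=i[\inf\vp,\sup\vp]\cup(-i)[\inf\vp,\sup\vp]=i[-\sup\vp,\sup\vp]=i[\inf\vp,\sup\vp],
\]
with no point or residual spectrum.

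The step I expect to require the most care is the explicit evaluation of the symbol $a(\xi)$ — in particular keeping track of the orientation signs along the two rays in the kernel $k$ and choosing the correct branches of $c^{\,s-1}$ in the Mellin integrals, so that the two contributions combine into $\sinh[\xi(\pi-2\theta)]/\cosh\pi\xi$; once the symbol is in hand, the remainder is the standard Fredholm theory of Wiener--Hopf operators with continuous symbol. I also note that the same symbol $\Phi$ governs the \emph{infinite} symmetric wedge: there the change of variables produces an honest (full-line) convolution, diagonalized by the Mellin transform into multiplication by $\Phi(\xi)$, so that the finite and infinite symmetric wedges have the same Kerzman--Stein spectrum.
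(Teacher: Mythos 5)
Your argument is correct and follows essentially the same route as the paper: the exponential change of variables turns the off-diagonal blocks into truncated convolutions on a half-line (your Wiener--Hopf operator $W(\vp)$ is the paper's Toeplitz operator $\opS\vp$ on $H^2(\mR)$ read through the Paley--Wiener isometry), your Mellin/beta-integral evaluation of the symbol is the paper's table computation of $\widehat k_\mp$, and the spectrum of the self-adjoint scalar piece is identified exactly as in the paper via Hartman--Wintner/Fredholm symbol theory plus Coburn's lemma. The one place you streamline matters is in diagonalizing the constant skew-hermitian matrix $M$ by a fixed unitary so that $\widetilde\opA$ decouples as $iW(\vp)\oplus(-i)W(\vp)$, which replaces the paper's resolvent identities \eqref{eq:act-diag}--\eqref{eq:act-diag2} and its separate verification that the block operator has no point or residual spectrum.
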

Interestingly, the spectrum is purely continuous as it was for the example of the unbounded wedge as shown in \cite{bolt00}.  Other examples showing this behavior include an infinite strip and logarithmic sector~\cite{bolt00}. The point spectrum, however, seems to be affected more by the global geometry of the boundary.  In particular, our second result shows that for a piecewise continuously differentiable region, noncompactness occurs precisely due to the individual corners.
\begin{thrm}
Let $\Om\subset\subset\mC$ be a region with (Lipschitz) piecewise continuously differentiable boundary. 
Then the essential spectrum of $\opA$ is the closed interval
\[
\sigma_e(\A) = i [\inf\vp,\sup\vp] = i [-\sup\vp,\sup\vp] .
\]
where $\vp$ in the function in Theorem 1 with $M$ the Lipschitz constant for $\bOm$.
\label{thrmB1} 
\end{thrm}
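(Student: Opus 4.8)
The plan is a localization argument: to compute $\sigma_e(\opA)$ by reducing $\opA$, modulo compact operators, to a finite orthogonal sum of one model operator per corner, each of which is recognized as the Kerzman--Stein operator of a finite wedge and analyzed by Theorem~\ref{thm:R1}. First I would record the preliminaries. The operator $\opA$ is bounded on $L^2(\bOm)$ by the Coifman--McIntosh--Meyer theorem on the Cauchy integral along a Lipschitz curve (cf.\ Lanzani~\cite{lanzani}); since $\opA=\opC-\opC^*$ is skew-hermitian it is normal, so $\sigma(\opA)\subset i\mR$ and $\sigma(\opA)\setminus\sigma_e(\opA)$ consists only of isolated eigenvalues of finite multiplicity. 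Thus it suffices to determine $\sigma_e(\opA)$, which is stable under compact perturbations.

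Next I would set up the localization. Let $p_1,\dots,p_N$ be the corners of $\bOm$ and fix $\eps>0$ smaller than a third of the least distance between them; choose cutoffs with $\sum_{j=0}^N\psi_j^2\equiv1$ where, for $1\le k\le N$, $\psi_k\equiv1$ on $\{|w-p_k|<\eps\}$ and $\supp\psi_k\subset\{|w-p_k|<2\eps\}$, so the supports $\supp\psi_1,\dots,\supp\psi_N$ are pairwise disjoint while $\supp\psi_0$ stays at distance $\ge\eps$ from every corner. The Kerzman--Stein kernel $K(z,w)$ satisfies $|K(z,w)|\le C|z-w|^{-1}$ near the diagonal, vanishes identically along straight segments, and extends continuously across $C^1$ arcs; since each $\psi_k$ is Lipschitz, the kernel $(\psi_k(z)-\psi_k(w))K(z,w)$ is then bounded on a curve of finite length, hence Hilbert--Schmidt, so every commutator $[\psi_k,\opA]$ is compact and $\opA\equiv\sum_{j=0}^N\psi_j\opA\psi_j$ modulo compacts. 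Now the kernel of $\psi_0\opA\psi_0$ is supported where $K$ is bounded (the $C^1$ part of $\bOm$, away from corners), so $\psi_0\opA\psi_0$ is Hilbert--Schmidt, hence compact; and the disjointness of the remaining supports makes $\sum_{j=1}^N\psi_j\opA\psi_j$ block diagonal for $L^2(\bOm)=\bigoplus_{k=1}^N L^2(\supp\psi_k)\oplus L^2(Z)$, where $Z$ is the rest of $\bOm$. Therefore $\sigma_e(\opA)=\bigcup_{k=1}^N\sigma_e(\psi_k\opA\psi_k|_{L^2(\supp\psi_k)})$.

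Then, for each corner $p_k$ whose two $C^1$ sides meet with opening angle $2\theta_k$, I would straighten the boundary near $p_k$: after a translation, a rotation, and an arc-length--type change of coordinates sending the two sides to the rays $r\mapsto re^{\pm i\theta_k}$, the straightening map is $C^1$ with differential tending to the identity at the vertex and pulls arc length back to $(1+o(1))$ times the model measure as $r\to0$. Transporting $\psi_k\opA\psi_k$ through this chart and comparing with the Kerzman--Stein operator of the finite wedge $W_{\theta_k}$ of Theorem~\ref{thm:R1} (both cut off near the vertex), I expect the difference of kernels to be dominated by $\omega(|z|+|w|)\,|z-w|^{-1}$ with a modulus $\omega(t)\to0$ as $t\to0$, built from the $C^1$ moduli of the two sides and the Jacobian discrepancy, plus a remainder supported where $K$ is bounded; a variable-coefficient singular-integral estimate---in the spirit of Lanzani's compactness argument, but with a perturbing coefficient that vanishes at the vertex---then shows this difference is compact. (The endpoints of $W_{\theta_k}$ cause nothing, since $K\equiv0$ along its straight sides.) By Theorem~\ref{thm:R1} the model has purely continuous spectrum $i[-\sup\vp_{\theta_k},\sup\vp_{\theta_k}]$, hence no eigenvalues, and being normal it has no discrete spectrum, so $\sigma_e(\psi_k\opA\psi_k|_{L^2(\supp\psi_k)})=i[-\sup\vp_{\theta_k},\sup\vp_{\theta_k}]$. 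Assembling, $\sigma_e(\opA)=i[-\max_k\sup\vp_{\theta_k},\max_k\sup\vp_{\theta_k}]$; since for each $\xi>0$ the quantity $\sinh[\xi(\pi-2\theta)]/\cosh(\xi\pi)$ is monotone in $|\pi-2\theta|$, so is $\sup_\xi|\vp_{\theta}|$, and since a corner of opening $2\theta$ forces local Lipschitz constant $|\cot\theta|$ while $C^1$ arcs force none, the sharpest corner realizes the Lipschitz constant $M$ and $\max_k\sup\vp_{\theta_k}=\sup\vp$ for the $\vp$ of Theorem~\ref{thm:R1} at that angle, which is the assertion.

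The main obstacle will be the comparison in the third step: proving that replacing the curved boundary near a corner by its tangent wedge, and arc length by flat measure, changes $\opA$ only by a compact operator. This is precisely where the $C^1$ hypothesis on the arcs is used, and it must be executed with the Cauchy-integral estimates of Coifman--McIntosh--Meyer and Lanzani, exploiting that the coefficient measuring the deviation of $\bOm$ from the model wedge tends to zero as one approaches the vertex. By comparison, the partition-of-unity bookkeeping in the second step and the elementary analysis of $\vp$ in the fourth are routine.
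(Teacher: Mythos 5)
Your overall strategy coincides with the paper's: localize $\opA$ modulo compacts to a block sum of one operator per corner, identify each block with the symmetric-wedge operator of Theorem~\ref{thm:R1} up to a compact error, and take the union of the resulting symmetric intervals. But two steps contain genuine gaps. First, your disposal of the smooth part rests on the claim that the Kerzman--Stein kernel is bounded (indeed ``extends continuously'') across $C^1$ arcs, so that $\psi_0\opA\psi_0$ is Hilbert--Schmidt. For a merely continuously differentiable arc this is false: the cancellation in the kernel yields only $\omega(|z-w|)\,|z-w|^{-1}$ with $\omega$ the modulus of continuity of the tangent, which need not be bounded or even square-integrable near the diagonal (square-integrability requires roughly $C^{1,\alpha}$ with $\alpha>1/2$, continuity requires $C^2$). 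The necessary input here is Lanzani's theorem that the Kerzman--Stein \emph{operator} of a $C^1$ region is compact; the paper smooths the corners to produce an auxiliary $C^1$ region $\widetilde\Om$ agreeing with $\bOm$ on $\supp\chi_0$ and writes $\opA_{0,0}=\opM_0\circ\widetilde\opA\circ\opM_0$.

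Second, the heart of the proof --- that near a corner $\opA$ differs from the wedge operator by a compact --- is exactly the step you defer, and the mechanism you sketch would not close it. Writing the boundary near a corner as a graph $\phi(x)=M|x|+p(x)$ with $p\in C^1$, $p(0)=p'(0)=0$, a pointwise domination of the kernel difference by $\omega(|z|+|w|)\,|z-w|^{-1}$ cannot by itself give boundedness, let alone compactness, of the \emph{same-side} contribution (both points on one $C^1$ half-arc): there the wedge kernel vanishes identically, the difference is the full, diagonally singular Kerzman--Stein kernel of a $C^1$ arc, and one must again invoke the principal-value cancellation, i.e.\ Lanzani's theorem, rather than a size estimate. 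For the \emph{cross} pieces the kernel is sign-separated, and the paper bounds them by $C\,\|p'\|_\infty$ times the Hilbert transform applied to a characteristic function; but $\|p'\|_\infty$ over the whole window is a fixed constant, so even here compactness is not immediate --- the paper's Proposition~\ref{propB3} shrinks the window about the vertex to make the cross-piece norm $<\eps$, checks that the remainder (supported where $|x-y|$ is bounded below) is Hilbert--Schmidt, and concludes via the small-norm-plus-compact criterion of Lemma~\ref{lemB2}. You would need to supply these ingredients, or an equivalent, to complete the argument; your partition-of-unity bookkeeping, the identification $\sigma_e=\sigma$ for the eigenvalue-free normal wedge operators, and the final monotonicity/assembly step are fine.
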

In particular, we will prove that modulo a compact operator the Kerzman-Stein operator $\opA : L^2(\bOm) \rar L^2(\bOm)$ is equivalent to a diagonal action of Kerzman-Stein operators for symmetric wedges (one wedge for each corner).  The essential spectrum of an operator is unchanged by compact perturbation and so the essential spectrum is the same as that of the diagonal action of Kerzman-Stein operators for symmetric wedges.  Evidently, this is a union of intervals symmetric with respect to zero and is therefore the largest of them.

To better illustrate how the (smooth) global geometry of a region can affect the point spectrum, we include new examples of continuously differentiable regions whose Kerzman-Stein operators have large norm; necessarily their Cauchy projections also have large norm.  For general regions, we also demonstrate how the spectrum changes under continuously differentiable perturbation.  

We mention that Kerzman first suggested the problem of computing the spectrum for $\opA$ in \cite{kerzman}.  Subsequent work on the problem was concerned with giving a complete description of the spectrum for model domains~\cite{bolt00}, asymptotics of eigenvalues for ellipses with small eccentricity~\cite{bolt05}, and norm estimates that are invariant with respect to M\"obius transformation~\cite{barrett-bolt,bolt07}.
For a disc or halfplane, there is complete cancellation of singularities and the Kerzman-Stein operator is trivial~\cite{kerzmanstein}. 

The manuscript is organized as follows.   The spectrum of the Kerzman-Stein 
operator on the symmetric wedge takes the approach of \cite[\S3]{bolt00} and uses a change of variables to convert $\opA$ into a convolution operator. 
The difference in this work is that via some additional Paley-Wiener theory
the Fourier transform produces a rotated Toeplitz operator as opposed to a rotated multiplication operator.
We then construct examples of regions with large Kerzman-Stein eigenvalue.  
Subsequently, we prove that the essential spectrum for a piecewise smooth region is determined only by the angles at the corners, and we modify a method of Lanzani to characterize the change in spectrum due to a continuously differentiable perturbation. 
We conclude with some questions and observations to guide future research.

%
%
\section{Spectrum for a symmetric wedge}

The proof of Theorem \ref{thm:R1} is organized as follows.  Functions in $L^2(W_\theta)$ are represented using pairs of square integrable functions defined on a unit interval that represent the function on the upper and lower segments.  After a change of variable, these pairs correspond with pairs of functions defined on the negative real line. 
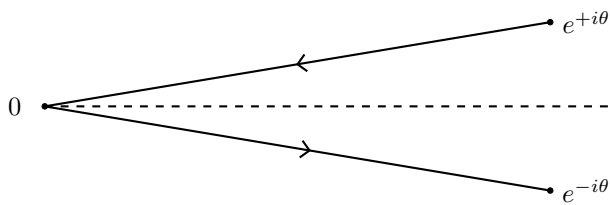
\begin{figure}[ht]
\quad
\begin{tikzpicture}[scale=0.8]

\draw [thick,dashed] (0,0) -- (9.4,0);
\draw [thick] (0,0) -- (8.4,1.4);
\draw [thick] (0,0) -- (8.4,-1.4);

\node at (9,1.4) {$e^{+i\theta}$};
\node at (9,-1.4) {$e^{-i\theta}$};

\node at (-.5,0) {$0$};

\draw [thick] (4.2+ .2*.581,.7+.2*.814) -- (4.2,.7) -- (4.2+ .2*.814,.7+.2*-.581);
\draw [thick] (4.4- .2*.814,-.7333-.2*.581) -- (4.4,-.7333) -- (4.4- .2*.581 , -.7333-.2*-.814);

\draw[fill] (0,0) circle (.045);
\draw[fill] (8.4,1.4) circle (.045);
\draw[fill] (8.4,-1.4) circle (.045);

\end{tikzpicture}
\caption{Symmetric wedge, $W_\theta = \{ r e^{\pm i\theta}: 0\leq r \leq 1 \}$}
\label{fig-wedge}
\end{figure}
 In the new variable, the Kerzman-Stein operator acts as a 
convolution operator on $\mR$ that is conjugated with multiplication by the characteristic function for the negative real numbers.  The spectrum is analyzed using Paley-Wiener theory and standard results about Toeplitz operators. 
The symmetric wedge is illustrated in Figure~\ref{fig-wedge}. 

\smallskip
So let $g = \A f$ where $f,g\in L^2(W_\theta)$ are expressed using 
\[
f = \begin{pmatrix} f_+ \\ f_- \end{pmatrix}
\quad\text{and}\quad
g = \begin{pmatrix} g_+ \\ g_- \end{pmatrix}
\]
where $f_\pm, g_\pm \in L^2([0,1])$ and 
the identification $L^2(W_\theta) \cong L^2([0,1]) \times L^2([0,1])$ is made
via   $f_\pm(s) = f(s e^{\pm is})$, $g_\pm(s) = g(s e^{\pm is})$.
Then using \eqref{eq-intro1}, 
\begin{align*}
g_{\pm}(s) &= \frac{\chi_{[0,1]}(s)}{2\pi i}
  \int_0^1 \Big( \frac{\pm e^{\mp i\theta}}{t e^{\mp i\theta}-s e^{\pm i\theta}} 
   - \frac{\mp e^{\mp i\theta}}{t e^{\pm i\theta}-s e^{\mp i\theta}}\Big) f_{\mp}(t)
   \, dt\\
      &= \frac{\pm e^{\mp i\theta}\cos\theta \, \chi_{[0,1]}(s)}{\pi i}
      \int_0^1 \frac{t-s}{t^2+s^2-2st\cot(2\theta)} f_{\mp}(t)\, dt
\end{align*}
for $s\in (0,1)$.  (Since the functions are defined in $L^2([0,1])$, the values at the endpoints may be ignored.)

The isometry $\Lambda: L^2([0,1]) \to L^2((-\infty,0])$ defined by $\Lambda h(u) = h(e^u)e^{u/2}$ for $u<0$ enables us to rewrite this as
\begin{align}
(\Lambda g_\pm)(u) 
&=  \chi_{(-\infty,0)}(u)\int_{-\infty}^0 k_\mp (u-v) (\Lambda f_{\mp})(v)\, dv  \notag \\[.03in]
&=  \chi_{(-\infty,0)}(u) \big(k_\mp*(\chi_{(-\infty,0)}  \Lambda f_{\mp})\big)(u) \label{eq:B0}
\end{align}
where
\[
k_\mp(u) = \frac{\mp e^{\mp i\theta}\cos(\theta)}{\pi i} \frac{\sinh(u/2)}{\cosh(u)-\cos 2\theta}.
\]

We next recall one of the classical Paley-Wiener theorems as can be found in Rudin~\cite{Rud87}.  
Let $\mathcal F f = \widehat f$ denote the Fourier transform as given by 
\[
\widehat f(\xi) = \frac{1}{\sqrt{2\pi}} \int_\R e^{-ix\xi} f(x)\, dx
\]
valid for $\Imag \xi > 0$ and 
a.e.\  $\xi \in \mR$.  
Also let $H^2(\mR)\subset L^2(\mR)$ denote the (closed) Hardy subspace consisting of functions that are holomorphic in the upper half plane and whose boundary values are square integrable. 
Then a Paley-Wiener Theorem \cite[Theorem 19.2]{Rud87} says that the Fourier transform is an isometry $L^2((-\infty,0))\to H^2(\mR)$.   
Of course, the inner product on $H^2(\mR)$ is the one inherited from $L^2(\mR)$.
Stated differently, Theorem 19.2 says that via the Fourier transform, multiplication by $\chi_{(-\infty,0)}$ in $L^2(\mR)$ corresponds with taking the \sze\ projection $L^2(\mR) \perparrow H^2(\mR)$.  That is, 
\begin{equation}
\mathcal F\big(\chi_{(-\infty,0)}f\big)(\xi) 
= \opS \widehat f(\xi) 
\label{eqn:Ft of (-infty,0]}.
\end{equation}
for $f\in L^2(\mR)$.

We now define
\[
\vp(\xi) =  \frac{\sinh[\xi(\pi-2\theta)]}{\cosh (\xi\pi)}  
\]
for $\xi\in\mR$.
Then by formula \cite[3.984-3]{GrRy07}, it follows that $\widehat k_- = +e^{-i \theta} \vp / \sqrt{2\pi}
 $ and $ \widehat k_+= -e^{+i \theta} \vp / \sqrt{2\pi}$. 
By also incorporating \eqref{eq:B0} and \eqref{eqn:Ft of (-infty,0]}, it follows that the Kerzman-Stein operator for the symmetric wedge can be expressed succinctly using
\[
\begin{pmatrix}
\widehat{\Lambda g_+} \\ \widehat{\Lambda g_-}
\end{pmatrix}
= \begin{pmatrix}
+e^{-i\theta}  \opS \big[ \vp \widehat{\Lambda f_-} \big] \\
-e^{+i\theta}  \opS \big[ \vp \widehat{\Lambda f_+} \big]
\end{pmatrix}.
\]
By defining a Toeplitz operator $\opT = \opS \vp: H^2(\mR) \rar H^2(\mR)$ and
by utilizing the isometry $\mathcal F \Lambda: L^2([0,1]) \rar H^2(\mR)$, we can further simplify the problem by noting that 
$\sigma(\opA) = \sigma(\widetilde \opA)$ where
$
\widetilde\opA =  ( \F\Lambda)\circ\opA\circ(\F\Lambda)^{-1}
$ 
acts on $H^2(\mR) \times H^2(\mR)$ via 
\[
\widetilde \opA \begin{pmatrix} h_1 \\ h_2 \end{pmatrix} = \begin{pmatrix} 0\! & \!+e^{-i\theta}\opT \\ -e^{+i\theta}\opT\! &\! 0\end{pmatrix}
\begin{pmatrix} h_1 \\ h_2 \end{pmatrix}. 
\]
Since $\opA$ is skew-hermitian, it is immediate that its spectrum is imaginary.  In fact, the spectrum is symmetric with respect to zero since it commutes with the anti-linear involution, $f\in L^2 \rar \ov {fT}$.  (Recall that $T=T(w)$ is the unit tangent vector at $w\in W_\theta$ represented as a complex number.)  This fact is established in \cite{bolt05}.

Since $\vp$ is smooth, bounded, and real-valued, $\opT$ is self-adjoint.
Its spectrum is given according to the theorem of Hartman and Wintner by 
\[
\sigma(\opT) = [\essinf \vp, \esssup \vp]  = [\inf \vp, \sup \vp] = [-\sup\vp, +\sup\vp]
\]
where the last equality follows since $\vp$ is odd.  For a proof of the Hartman and Winter result, see Douglas~\cite[Theorem~7.20]{douglas}.  We mention that a slight modification is needed to accommodate the application needed here.  As presented in \cite{douglas}, the result applies to Toeplitz operators for the unit disc, $\opS \vp: H^2(\bD) \rar H^2(\bD)$.
That the result holds also for the upper half-plane can be seen via the isometry $\Lambda_h : L^2(\bD) \rar L^2(\mR)$ given by $\Lambda_h f = (f\circ h) \sqrt{h'}$ for $h(z) = (z-i)/(z+i)$.  The isometry commutes with the \sze\ projection and is induced by the biholomorphism $h : \{z\in \mC: \Imag z > 0\} \rar  \{z\in \mC: |z| < 1\}$.

We show next that $\sigma(\opT)$ is purely continuous.  As for any bounded operator on a Hilbert space, the spectrum of $\opT : H^2(\mR)\rar H^2(\mR)$ 
can be decomposed as 
\[
\sigma(\opT) = \sigma_p(\opT) \cup \sigma_c(\opT) \cup \sigma_r(\opT) 
\]
where the point spectrum $\sigma_p(\opT)$ consists of $\lambda \in \mC$ for which $\opT - \lambda \opI$ is not one-to-one, the continuous spectrum $\sigma_c(\opT)$ consists of $\lambda\in\mC$ for which  $\opT - \lambda \opI$ is a one-to-one mapping onto a dense proper subspace of $H^2(\mR)$, and the residual spectrum $\sigma_r(\opT)$ consists of all other $\lambda\in\sigma(\opT)$.  (See Rudin~\cite[p.343]{Rud91}.)
The fact that $\opT$ has no point spectrum or residual spectrum follows from the claims:
\vspace{.04in}
\begin{enumerate}
\item[(i)] $\lambda\in\sigma(\opT)$ implies that $\opT -\lambda \opI$ is one-to-one, and
\vspace{.05in}
\item[(ii)] the range of $\opT - \lambda \opI$ is dense in $H^2(\mR)$.
\end{enumerate}
\vspace{.04in}
The proof of (i) is a direct consequence of a proposition by Coburn.  In particular, since $\vp-\lambda \in L^\infty(\mR)$ for any fixed $\lambda\in\mR$, then $\Ker( \opT - \lambda \opI) = \{0\}$.  For Coburn's result, see Douglas~\cite[Proposition~7.24]{douglas}.  As in the preceding paragraph, the application needs the equivalent interpretation for the upper half-plane.  
It also uses the fact that  $\opT-\lambda \opI$ is a self-adjoint Toeplitz operator with symbol $\vp-\lambda$.
To prove (ii), suppose $\lambda\in\sigma_r(\opT)$.  Then $\opT - \lambda \opI$ is one-to-one and does not have dense range; in particular, $\dim\left[\Ran (\opT-\lambda \opI)\right]^\perp \geq 1$.  However, 
\[
\large[\Ran (\opT-\lambda \opI)\large]^\perp =  \Ker(\opT-\lambda \opI)^*
 = \Ker(\opT^* - \lambda \opI) = \Ker(\opT -\lambda \opI),
\] 
implying that $\lambda\in\sigma_p(\opT)$.  So (ii) follows from (i).

At last we come to the proof of the claims in Theorem~\ref{thm:R1}.  
We first establish that $\sigma(\widetilde\opA) = i \, \sigma(\opT)$.  To determine when $\pm i\lambda \in \sigma(\widetilde \opA)$, we consider the invertibility of 
$ \widetilde \opA\pm i\lambda I$.
Using
\begin{equation}
(\widetilde\opA \pm i \lambda \opI)^{-1} = (\opT-\lambda \opI)^{-1} (\opT+\lambda \opI)^{-1}
(\widetilde\opA^* \pm i\lambda \opI),
\label{eq:act-diag}
\end{equation}
we see that $\pm \lambda \not\in \sigma(\opT)$ implies $\pm i\lambda\not\in\sigma(\widetilde \opA)$.  So $\sigma(\widetilde\opA) \subset i \,\sigma(\opT)$.  
It is to be understood in \eqref{eq:act-diag} that $(\opT- \lambda \opI)^{-1}$ and $(\opT + \lambda \opI)^{-1}$ act diagonally on $H^2(\mR) \times H^2(\mR)$.
Similarly, using
\begin{equation}
(\opT^2-\lambda^2 \opI)^{-1}
=  
(\widetilde\opA +i\lambda \opI)^{-1}(\widetilde\opA -i\lambda \opI)^{-1},
\label{eq:act-diag2}
\end{equation}
we observe that $\pm i \lambda \not\in \sigma(\widetilde\opA)$ implies $\lambda^2 \not\in \sigma(\opT^2)$, and therefore $\pm \lambda \not\in \sigma(\opT)$.   
Consequently, $i \,\sigma(\opT) \subset \sigma(\widetilde\opA)$.
Again it is to be understood in \eqref{eq:act-diag2} that $(\opT^2 - \lambda \opI)^{-1}$ acts diagonally.
It follows that $\sigma(\widetilde\opA) = i \, \sigma(\opT)$ as claimed.

It remains to be seen that $\sigma (\widetilde \opA)$ is purely continuous.  As done in the earlier paragraph, it will suffice to establish the following claims:
\vspace{.04in}
\begin{enumerate}
\item[(i)] $\lambda\in\sigma(\widetilde\opA)$ implies that $\widetilde\opA -\lambda \opI$ is one-to-one, and
\vspace{.05in}
\item[(ii)] the range of $\widetilde\opA - \lambda \opI$ is dense in $H^2(\mR) \times H^2(\mR)$.
\end{enumerate}
\vspace{.04in}
To prove (i), suppose $(h_1,h_2) \in H^2(\mR)\times H^2(\mR)$ and $(\widetilde\opA - \lambda \opI)(h_1, h_2)=0$.  Then
\begin{align*}
-\lambda h_1 + e^{-i\theta} \opT h_2 &= 0 \\
-e^{+i\theta}\opT h_1 - \lambda h_2 &=0.
\end{align*}
From the first equation it follows that $e^{+i\theta} \lambda h_1 = \opT h_2$, so  the second equation gives $-(\opT^2 + \lambda^2)h_2 =0$ after multiplying by $\lambda$.
It follows that $h_2 \equiv 0$ or else $\pm i \lambda \in \sigma_p(\opT)$.  The latter case is excluded by (i) from the earlier paragraph.  Hence, $h_2 \equiv 0$ which implies $h_1 \equiv 0$ by the equations above, and (i) is proved.
To prove (ii), we again notice that 
\[
\large[\Ran (\widetilde \opA-\lambda \opI)\large]^\perp =  \Ker(\widetilde\opA-\lambda \opI)^*
 = \Ker(\widetilde \opA^* - \lambda \opI) = -\Ker(\widetilde\opA +\lambda \opI).
\] 
Therefore, if $\Ran(\widetilde\opA-\lambda \opI)$ is not dense, then $\widetilde\opA +\lambda \opI$ has a nontrivial kernel and (ii) again follows from (i).


%
%

\section{Smooth regions with large Kerzman-Stein eigenvalue}
In this section we construct continuously differentiable regions whose Kerzman-Stein operators have a large eigenvalue.  
The regions also provide examples where the Cauchy operator has large norm.  It will be apparent in the argument that there exist infinitely smooth regions that satisfy the same estimates. 

We begin by considering a configuration of $n+1$ unit intervals as shown in Figure~\ref{figBex1}.  The intervals are equally spaced and have a total width separation of $\eps$. 
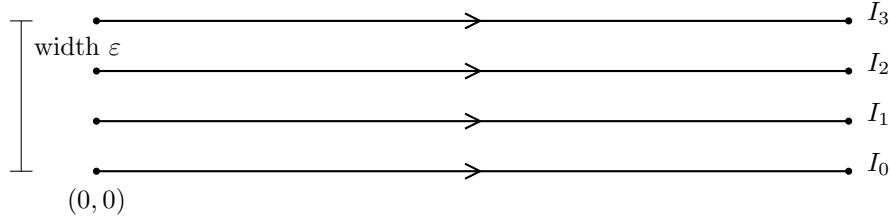
\begin{figure}[ht]
\quad
\begin{tikzpicture}

\draw [thick] (0,0) -- (10,0);
\draw [thick] (0,.666) -- (10,.666);
\draw [thick] (0,1.333) -- (10,1.333);
\draw [thick] (0,2) -- (10,2);

\node at (0,-.4) {$(0,0)$};

\node at (10.4,.1) {$I_0$};
\node at (10.4,.766) {$I_1$};
\node at (10.4,1.433) {$I_2$};
\node at (10.4,2.1) {$I_3$};

\draw (-1,0) -- (-1,2);
\draw (-1.15,0) -- (-.85,0);
\draw (-1.15,2) -- (-.85,2);

\node at (-.25,1.667) {width $\eps$};

\draw [thick] (4.9,-.1) -- (5.1,0) -- (4.9,.1);
\draw [thick] (4.9,.566) -- (5.1,.666) -- (4.9,.766);
\draw [thick] (4.9,1.233) -- (5.1,1.333) -- (4.9,1.433);
\draw [thick] (4.9,1.9) -- (5.1,2) -- (4.9,2.1);

\draw[fill] (0,0) circle (.04);
\draw[fill] (10,0) circle (.04);
\draw[fill] (0,.666) circle (.04);
\draw[fill] (10,.666) circle (.04);
\draw[fill] (0,1.333) circle (.04);
\draw[fill] (10,1.333) circle (.04);
\draw[fill] (0,2) circle (.04);
\draw[fill] (10,2) circle (.04);

\end{tikzpicture}
\caption{Configuration of unit intervals when $n=3$}
\label{figBex1}
\end{figure}
The intervals are denoted $I_j$, $0\leq j \leq n$, and are oriented as shown.
The union of the intervals forms part of the boundary a region $\Om_{n,\eps}$ as illustrated in Figure~\ref{figBex2}.  The remaining boundary pieces are unit intervals and semicircles, and they are assembled
\begin{figure}[ht]

\begin{tikzpicture}

\draw [thick,dashed,fill=gray] (0,2.333) arc (90:270:1.167);
\draw [thick,dashed,fill=gray] (10,0) arc (-90:90:.167);
\draw [thick,dashed,fill=gray] (10,.666) arc (-90:90:.167);
\draw [thick,dashed,fill=gray] (10,1.333) arc (-90:90:.167);
\draw [thick,dashed,fill=gray] (10,2) arc (-90:90:.167);

\draw [thick,dashed,fill=white] (0,.666) arc (90:270:.167);
\draw [thick,dashed,fill=white] (0,1.333) arc (90:270:.167);
\draw [thick,dashed,fill=white] (0,2) arc (90:270:.167);

\path [fill=white] (-.02,.353) -- (.02,.353) -- (.02,.646) -- (-.02,.646);
\path [fill=white] (-.02,1.02) -- (.02,1.02) -- (.02,1.353) -- (-.02,1.353);
\path [fill=white] (-.02,1.686) -- (.02,1.686) -- (.02,1.98) -- (-.02,1.98);

\path [fill=gray] (-.02,0) -- (10.02,0) -- (10.02,.333) -- (-.02,.333);
\path [fill=gray] (-.02,.666) -- (10.02,.666) -- (10.02,1) -- (-.02,1);
\path [fill=gray] (-.02,1.333) -- (10.02,1.333) -- (10.02,1.666) -- (-.02,1.666);
\path [fill=gray] (-.02,2) -- (10.02,2) -- (10.02,2.333) -- (-.02,2.333);

\draw [thick] (0,0) -- (10,0);
\draw [thick,dashed]  (0,.333) -- (10,.333);
\draw [thick] (0,.666) -- (10,.666);
\draw [thick,dashed] (0,1) -- (10,1);
\draw [thick] (0,1.333) -- (10,1.333);
\draw [thick,dashed] (0,1.666) -- (10,1.666);
\draw [thick] (0,2) -- (10,2);
\draw [thick,dashed] (0,2.333) -- (10,2.333);

\draw [thick] (4.9,-.1) -- (5.1,0) -- (4.9,.1);
\draw [thick] (4.9,.566) -- (5.1,.666) -- (4.9,.766);
\draw [thick] (4.9,1.233) -- (5.1,1.333) -- (4.9,1.433);
\draw [thick] (4.9,1.9) -- (5.1,2) -- (4.9,2.1);

\draw[fill] (0,0) circle (.04);
\draw[fill] (10,0) circle (.04);
\draw[fill] (0,.666) circle (.04);
\draw[fill] (10,.666) circle (.04);
\draw[fill] (0,1.333) circle (.04);
\draw[fill] (10,1.333) circle (.04);
\draw[fill] (0,2) circle (.04);
\draw[fill] (10,2) circle (.04);

\end{tikzpicture}
\caption{Continuously differentiable region $\Om_{n,\eps}$ when $n=3$}

\label{figBex2}

\end{figure}
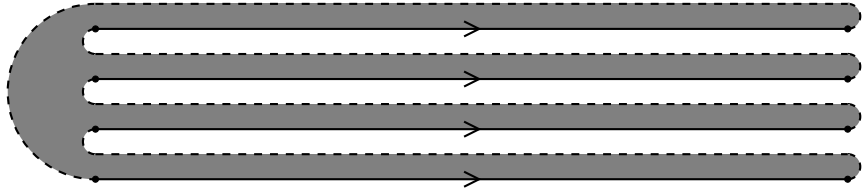
 in such a way that the region has continuously differentiable boundary. 
 (In fact, this construction gives a region of class $C^{1,1}$.  It would be an easy matter to construct a 
 region of class $C^\infty$ that also has the intervals as part of the boundary.)
Since the (skew-hermitian) Kerzman-Stein operator $\opA$ for $\Om_{n,\eps}$ is then compact, in order to show it has a large eigenvalue, it will be enough to show that $\|\opA \|$ is large.
For this we will show that $\| \opA f_0\|$ is large for a particular function $f_0$ with $\| f_0 \| =1$.  
The function we have in mind takes value $1$ on $I_0$ and value $0$ elsewhere.  To show that $\|\opA f_0\|$ is large, it will be enough to show that the restriction $\opA f_0$ to the union $\cup_{j=1}^n I_j $ has large norm.  

In this way, the problem is reduced to considering the initial configuration of unit intervals.
For $z\in I_j$, $w\in I_k$ we have
\begin{align*}
z = x+ i\,\eps j/n\\
w = y+ i\,\eps k/n
\end{align*}
where $x,y\in[0,1]$. Moreover, regardless of $j,k$ we have $T(z) = 1 = T(w)$.  A simple computation reveals for the Kerzman-Stein kernel,
\begin{align*}
A(z,w) &= \frac{1}{2\pi i} 
\left(
\frac{1}{(y-x) + i\,\eps(k-j)/n} 
-
\frac{1}{(y-x) -i\,\eps(k-j)/n} 
\right) \\
&= \frac{1}{\pi}\,
\frac{\eps(j-k)/n}{(y-x)^2 +\eps(j-k)^2/n^2}.
\end{align*}
As expected, the kernel vanishes when $z,w$ belong to the same interval; that is, when $j=k$.

Introducing notation $\alpha_j = \eps j/n$, and then writing $z= x+i\,\alpha_j$ for a point of $I_j$, $j\neq 0$, we find for the function $f_0$ described above,
\begin{align}
\opA f_0(x+ i\,\alpha_j) &= \int_{I_0} A(z,w) f_0(w) \, ds_w \notag \\
& = \frac{1}{\pi} \int_0^1 \frac{\alpha_j}{(y-x)^2 + \alpha_j^2}  \, dy \notag \\
& = \frac{1}{\pi} 
\left(
\arctan \frac{1-x}{\alpha_j}
+ \arctan \frac{x}{\alpha_j}
\right)\!. \label{eqBex1}
\end{align}
We define $g_j(x) = \opA f_0(x+ i\, \alpha_j)$ for $x\in[0,1]$ using the last expression \eqref{eqBex1}.  Basic calculus can be used to show that $g_j(x) = g_j(1-x)$ and that $g_j$ increases on the interval $[0,1/2]$. It follows that for $x\in[\alpha^{1/2}_j,1-\alpha^{1/2}_j]$,
\[
g_j(x) 
\geq \frac{1}{\pi} 
\left(
\arctan \frac{1- \alpha^{1/2}_j}{\alpha_j}
+ \arctan \frac{1}{\alpha^{1/2}_j}
\right)
\geq 
 \frac{2}{\pi} 
\,\arctan \frac{1}{\alpha^{1/2}_j}.
\]
The latter estimate uses $(1-\alpha_j^{1/2})/\alpha_j \geq 1/\alpha_j^{1/2}$ which is valid when $\eps <1/4$ since then $\alpha_j <1/4$.  Using basic calculus 
we observe $\arctan (1/x) > \pi/2 - x$ for $x>0$, so 
\begin{align*}
\| \opA f_0 \|^2 &\geq \sum_{j=1}^n \int_0^1 (g_j(x))^2 \, dx\\
& \geq \sum_{j=1}^n (1-2\alpha^{1/2}_j) \, \frac{4}{\pi^2} \left(\frac{\pi}{2}-\alpha^{1/2}_j\right)^2 \\
& \geq \sum_{j=1}^n \left[1 -  \left(2+\frac{4}{\pi} \right) \alpha^{1/2}_j\right] \\
& = n - \left(2+\frac{4}{\pi}\right)  \sum_{j =1}^n \sqrt\frac{\eps j}{n}
\end{align*}
where the third inequality holds for small positive $\alpha_j$.  (Certainly $\alpha_j < 1/4$ is sufficient.)  Finally, using a basic lower (Riemann) sum we estimate
\[
\sum_{j=1}^n \sqrt\frac{j}{n}
=
1+ \sum_{j=1}^{n-1} \left(\sqrt\frac{j}{n} \, \frac{1}{n}\right) \cdot n
<1 +  \int_0^1 \sqrt x \, dx \cdot n
= 1+ \frac{2n}{3}
\] 
so that 
\begin{align*}
\| \opA \|^2 \geq 
\| \opA f_0 \|^2 &> 
n -  \left(2+\frac{4}{\pi}\right) \sqrt \eps \left(1+ \frac{2n}{3}\right) \\
&= n \left(1 -  \frac{4\pi + 8}{3\pi} \sqrt\eps \right) -  \frac{2\pi + 4}{\pi} \sqrt \eps.
\end{align*}
This can be made arbitrarily large by choosing $n\in \mN$ suitably large
and $\eps>0$ suitably small.

We mention how this also can be used to give an estimate for the norm of the Cauchy projection.  For this, recall that the Cauchy transform $\opC$ defined using a nontangential limit (as in \S1) is related by the Plemelj formulas to the Cauchy singular operator $\opC_0$ defined on $\bOm$ by
 \[
\opC_0 f(z) = \frac{1}{2\pi i} \, P.V. \int_{\bOm} \frac{f(w) \, dw}{w-z} \,\mbox{ for a.e. }\, z\in\bOm.
\]
In particular,  $\opC = \opC_0 + \frac{1}{2}\opI $ where $\opI$ is the identity. (This is for a region with continuously differentiable boundary.)  It is classical that $\opC$ and $\opC_0$ are bounded on $L^2(\bOm)$, so evidently the Kerzman-Stein operator $\opA \eqdef \opC-\opC^* =  \opC_0-\opC_0^*$ is bounded and can be expressed using \eqref{eq-intro1}.  Notice that the holomorphic and arc length differentials are related via $dw = T(w) \, ds_w$.

By the Cauchy integral formula, it is immediate that $\opC$ projects $L^2(\bOm)$ to the closed subspace $H^2(\bOm)$ consisting of limiting values of functions holomorphic on $\Om$.  
The norms of $\opA$ and $\opC$ as operators on $L^2(\bOm)$ are related then via 
\[
\| \opA \| = \left(\| C\|^2-1\right)^{1/2}.
\]
In fact, this identity holds for general projection operators acting on a Hilbert space.  (A proof can be found in Gerisch~\cite{gerisch}. 
Notice that our definitions give $\opA = 2i\,  \Imag\opC$.)

In case $\Om=\Om_{n,\eps}$ it follows that 
\[
\| \opC\|^2   \geq n \left(1 -  \frac{4\pi + 8}{3\pi} \sqrt\eps \right) +1 -  \frac{2\pi + 4}{\pi} \sqrt \eps.
\]
Notice, in particular, the curious limiting behavior as $\eps\rar 0^+$ which says 
\[
\liminf_{\eps\rar 0^+} \|\opC\|^2 \geq n+1
\]
where the right-hand side is exactly the number of intervals from which $\Om_{n,\eps}$ is constructed. 
Related to this, we mention work of Feldman, Krupnik, and Spitovsky that gives the norm of the Cauchy singular operator for a finite family of parallel lines~\cite{feldman}.   In particular, they prove 
$\| \opC_0 \| = \frac{1}{2}\cot \frac{\pi}{4n}$ where $n$ is the number of lines.

\medskip

We mention that throughout this manuscript, $\| \cdot \|$ represents an $L^2$ norm.  Other norms will be indicated using subscripts; for instance, $\| \cdot\|_\infty$ is an $L^\infty$ norm.

%
%

\section{Kerzman-Stein operator for a piecewise continuously differentiable region}

The proof of Theorem~\ref{thrmB1} is organized as follows.  We use characteristic functions to isolate the behavior of the Kerzman-Stein operator to various pieces of the boundary.  Locally near a corner, we use the following lemma to show compactness for the difference between Kerzman-Stein operators for $\bOm$ and an approximating symmetric wedge. (A proof can be found, for instance, in D'Angelo~\cite[p.148]{d'angelo}.)  
\begin{lemma}
Let $L : H_1 \rar H_2$ be a linear operator between Hilbert spaces.  Then $L$ is compact if and only if for every $\eps>0$ there exists $C = C_\eps>0$ and a compact operator $K = K_\eps$ such that 
\[ \| Lz \| \leq \eps \| z \| + C \| Kz \| \]
for all $z\in H_1$.
\label{lemB2}
\end{lemma}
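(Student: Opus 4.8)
The plan is to dispose of the forward implication immediately and concentrate on the converse. If $L$ is compact, the stated inequality holds for every $\eps>0$ simply by taking $K=L$ and $C=1$, since $\|Lz\|\le 0\cdot\|z\|+1\cdot\|Lz\|$; so that direction needs no work. The substance is the ``if'' direction: assuming that for each $\eps>0$ there is $C_\eps>0$ and a compact $K_\eps$ with $\|Lz\|\le \eps\|z\|+C_\eps\|K_\eps z\|$ for all $z\in H_1$, I must show $L$ is compact.

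First I would record that $L$ is bounded: applying the hypothesis with $\eps=1$ gives $\|Lz\|\le \|z\|+C_1\|K_1 z\|\le(1+C_1\|K_1\|)\|z\|$. Then I would establish compactness via the sequential criterion. Let $(z_n)$ be a bounded sequence in $H_1$, say $\|z_n\|\le 1$; the goal is a subsequence along which $(Lz_n)$ converges in $H_2$. For each $m\in\mN$ put $\eps=1/m$ and abbreviate $K_m=K_{1/m}$, $C_m=C_{1/m}$. Since $K_1$ is compact and $(z_n)$ is bounded, there is an infinite $S_1\subset\mN$ along which $(K_1 z_n)$ converges; since $K_2$ is compact, there is an infinite $S_2\subset S_1$ along which $(K_2 z_n)$ converges; iterating yields $S_1\supset S_2\supset\cdots$, and the diagonal sequence $(z_{n_k})$, with $n_k$ the $k$-th member of $S_k$, has the property that $(K_m z_{n_k})_k$ converges for every fixed $m$.

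The remaining step is to show $(Lz_{n_k})_k$ is Cauchy. For indices $k,l$ and any $m$, the hypothesis applied to $z_{n_k}-z_{n_l}$ gives
\[
\|Lz_{n_k}-Lz_{n_l}\|\le \tfrac{1}{m}\|z_{n_k}-z_{n_l}\|+C_m\|K_m z_{n_k}-K_m z_{n_l}\|\le \tfrac{2}{m}+C_m\|K_m z_{n_k}-K_m z_{n_l}\|.
\]
Given $\delta>0$, I first fix $m$ with $2/m<\delta/2$; then, since $(K_m z_{n_k})_k$ converges and is therefore Cauchy, I choose $N$ so that $C_m\|K_m z_{n_k}-K_m z_{n_l}\|<\delta/2$ whenever $k,l\ge N$. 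This yields $\|Lz_{n_k}-Lz_{n_l}\|<\delta$ for $k,l\ge N$, so $(Lz_{n_k})$ is Cauchy and converges by completeness of $H_2$; hence $L$ is compact.

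The one point needing care — and the main (modest) obstacle — is the order of quantifiers in the final estimate: the splitting parameter $m$ must be chosen first, using only the uniform bound $\|z_n\|\le 1$ to absorb the $\tfrac{2}{m}$ term, and only afterwards may one invoke the Cauchy property of the single sequence $(K_m z_{n_k})_k$. Interchanging these would demand a uniformity in $m$ that the hypothesis does not supply. Everything else is the standard diagonal extraction.
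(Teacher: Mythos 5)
Your proof is correct: the forward direction with $K=L$, $C=1$ is the right triviality, and the converse via nested subsequences, diagonal extraction, and the ``choose $m$ first, then $N$'' order of quantifiers is exactly the standard argument. The paper itself gives no proof of this lemma, deferring to D'Angelo, so there is nothing to compare against; your write-up would serve as a self-contained substitute for that citation.
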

Locally away from a corner, compactness follows from the Lanzani result that says the Kerzman-Stein operator for the boundary of a continuously differentiable region is compact~\cite{lanzani}. 
 For the non-local behavior, we show compactness using the theory of Hilbert-Schmidt integral operators.  

\subsection{Localization on the boundary via characteristic functions.} 
We begin by setting up characteristic functions to make precise the various pieces.
For each corner $w_j\in \bOm$, $1\leq j\leq n$, take an open set $U_j$ containing $w_j$ such that $\bOm \cap \ov U_j$ can be expressed as a connected graph over a symmetric interval.
  In particular, after a translation and a rotation, the set 
  $\bOm\cap \ov U_j$ is a graph $w = x + i\phi_j(x)$ over an interval $x\in[-a_j,+a_j]$ and the corner is located at the origin.
The rotation is such that $\phi_j(x) = M_j |x| + p_j(x)$ where $M_j\neq 0$ and $p_j\in C^1([-a_j,+a_j])$ with $p_j(0) = 0 = p_j'(0)$.   
The $U_j$, $1\leq j\leq n$, are assumed to be small enough that their closures are pairwise disjoint.   
 Let $\chi_j$ be the characteristic function (defined on $\bOm$) for the image set of the graph $w = x  +i\phi_j(x)$ when $x \in [-a_j,+a_j]$.  
 Away from the image set, $\chi_j$ takes value zero.
 Let one more characteristic function $\chi_0$ on $\bOm$ be defined according to $\chi_0 = 1 - \sum_{j=1}^n \chi_j$.

We express the Kerzman-Stein operator as a sum
$\opA  = \sum_{j,k = 0}^n \opA_{j,k}$
where $\opA_{j,k}$ is the integral operator associated to the kernel 
\[
A_{j,k}(z,w) = \frac{1}{2\pi i} 
\left( \frac{T(w)}{w-z} - \frac{\ov{T(z)}}{\ov w-\ov z} \right) \chi_j(z) \chi_k(w).
\]
Each $\opA_{j,k}$ is bounded on $L^2(\bOm)$, for if  
 $\opM_j$ denotes multiplication by $\chi_j$, then 
$\opA_{j,k} = \opM_j \circ \opA \circ \opM_k$.
 So the boundedness of $A_{j,k}$ 
 follows from the boundedness of $\opA$ and the boundedness of each $\opM_j$.

The kernels $A_{j,j}(\cdot,\cdot)$, $1\leq j \leq n$, have compact support in $(\bOm\cap \ov U_j) \times(\bOm\times \ov U_j)$ and so $\opA_{j,j}$ takes functions supported on $\bOm\cap \ov U_j$ to functions supported on $\bOm\cap \ov U_j$.   In what remains we show first that $\opA_{j,j}$ differs from the Kerzman-Stein operator for a symmetric wedge only by a compact operator. 
As a consequence, $\sum_{j=1}^n \opA_{j,j}$ is equivalent (spectrally, modulo a compact operator) to a diagonal action of Kerzman-Stein operators for symmetric wedges.  
We then show that the remaining operators $\opA_{j,k}$ are compact.   Taken together, these observations show that $\opA  = \sum_{j,k = 0}^n \opA_{j,k}$ 
is equivalent (spectrally, modulo a compact operator) to a diagonal action of Kerzman-Stein operators for symmetric wedges.

\subsection{Proof that $\opA_{j,j}$, $1\leq j\leq n$, is equivalent to the Kerzman-Stein operator for a symmetric wedge.} 

The proof of this fact is exactly the following approximation result whose latter claim is based on Lemma~\ref{lemB2}.  For ease of notation, we drop subscripts for the remainder of the subsection.

\begin{prop}
Let $p \in C^1([-a,+a ])$ be such that $p(0) = p'(0) = 0$ and let $M\neq 0$.   Consider two curves expressed as graphs for $x\in[-a , +a]$ via
\begin{align*}
\Gamma_{M,p} & : z_p(x)  =  x+ i( M|x| +p(x))  \\ 
\Gamma_{M,0} & : z_0(x)  =  x+ i  M|x|  
\end{align*} 
Expressed as an integral operator in terms of the graph parameter, the difference in the respective Kerzman-Stein operators is
$ \opA_{M,p} - \opA_{M,0} =
 \opE^{--}_{M,p} + \opE^{-+}_{M,p}+\opE^{+-}_{M,p}+ \opE^{++}_{M,p}$
 where $\opE^{--}_{M,p}$, $\opE^{++}_{M,p}$ are compact, and where
\begin{align}
\| \opE^{-+}_{M,p} \|  &< 
\|p'\|_\infty \, 3 \,(1+(|M|+\| p' \|_\infty)^2)^{1/2} 
 \label{eqA01}  \\
\| \opE^{+-}_{M,p} \|  &< 
\|p'\|_\infty \, 3 \,(1+(|M|+\| p' \|_\infty)^2)^{1/2}.
\label{eqA02}  
 \end{align}
 In particular, $\opA_{M,p} - \opA_{M,0}$ is compact.
 \label{propB3}
\end{prop}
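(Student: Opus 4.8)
The plan is to conjugate both Kerzman-Stein operators down to $L^2([-a,+a],dx)$ by the arc-length-to-parameter isometries $f\mapsto f(z_\bullet(x))\,|z_\bullet'(x)|^{1/2}$ (for $\bullet = p$ or $0$), so that $\opA_{M,p}$ and $\opA_{M,0}$ become integral operators whose kernels $\tilde K_p,\tilde K_0$ are built from the unit tangents $z_\bullet'(x)/|z_\bullet'(x)|$, the chords $z_\bullet(y)-z_\bullet(x)$, and the stretch factors $|z_\bullet'|^{1/2}$. Since unitary conjugation preserves both compactness and operator norm, it suffices to study the operator with kernel $\tilde K_p-\tilde K_0$, and I would split $[-a,+a]^2$ into its four quadrants relative to the corner at $0$, letting $\opE^{\sigma\tau}_{M,p}$ (for $\sigma,\tau\in\{-,+\}$) be the operator with kernel $(\tilde K_p-\tilde K_0)\,\chi_\sigma(x)\chi_\tau(y)$.

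For the same-sign quadrants, the point is that on $\{x<0,\,y<0\}$ the curve $\Gamma_{M,0}$ is the straight ray $x\mapsto x(1-iM)$: its unit tangent is constant, and a one-line computation shows that the Kerzman-Stein kernel of any line segment vanishes identically, so $\tilde K_0\equiv 0$ there. Hence $\opE^{--}_{M,p}$ is exactly the Kerzman-Stein operator of the $C^1$ arc $\{z_p(x):-a\le x\le 0\}$ (read in the graph parameter). Closing this arc up to a $C^1$ Jordan curve and applying Lanzani's theorem \cite{lanzani} that the Kerzman-Stein operator of a $C^1$ region is compact --- together with the fact that the kernel of the larger operator, cut down to the sub-arc by multiplication operators, is precisely the Kerzman-Stein kernel of the sub-arc, and that cutting down by bounded multiplications preserves compactness --- shows $\opE^{--}_{M,p}$ is compact; the quadrant $\{x>0,\,y>0\}$ is handled identically, giving compactness of $\opE^{++}_{M,p}$. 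This needs no norm estimate.

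For the cross quadrants, fix $x<0<y$. The algebraic input is $z_p(t)-z_0(t)=i\,p(t)$ for every $t$, hence $z_p(y)-z_p(x)=(z_0(y)-z_0(x))+i(p(y)-p(x))$, together with $\Rre\big(z_p(y)-z_p(x)\big)=y-x=|x|+|y|=\Rre\big(z_0(y)-z_0(x)\big)$, so both chords have modulus $\ge |x|+|y|$; moreover $|p(y)-p(x)|=\big|\int_x^y p'\big|\le\|p'\|_\infty(|x|+|y|)$, while each stretch factor $|z_\bullet'|$ lies between $1$ and $(1+(|M|+\|p'\|_\infty)^2)^{1/2}$, and $|z_p'|-|z_0'|$, as well as the change in the unit tangent, is $O(\|p'\|_\infty)$. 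Telescoping $\tilde K_p-\tilde K_0$ over the changes in the chord, in its conjugate, in the tangents, and in the stretch factors, and feeding in these bounds, shows that each of the finitely many resulting terms is dominated by $\|p'\|_\infty\,(1+(|M|+\|p'\|_\infty)^2)^{1/2}$ times the Hilbert-type kernel $(|x|+|y|)^{-1}$. The classical Hilbert inequality --- equivalently Schur's test with the weight $|t|^{-1/2}$ --- bounds the norm of the operator with kernel $(|x|+|y|)^{-1}$ on $L^2$ of the cross quadrant by $\pi$, and tracking the remaining constants through the $\frac{1}{2\pi}$ in the Kerzman-Stein kernel yields \eqref{eqA01}; \eqref{eqA02} follows by the symmetric computation.

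Finally, to upgrade the cross-quadrant \emph{norm} bounds to compactness of the whole difference I would invoke Lemma \ref{lemB2}. Given $\eps>0$, continuity of $p'$ with $p'(0)=0$ lets us pick $\delta>0$ with $3\,\|p'\|_{L^\infty[-\delta,\delta]}\,(1+(|M|+\|p'\|_\infty)^2)^{1/2}<\eps$; split $\opE^{-+}_{M,p}$ into its part supported in $\{|x|,|y|<\delta\}$, whose norm is $<\eps$ by \eqref{eqA01} applied over $[-\delta,+\delta]$, plus its part supported where $|x|\ge\delta$ or $|y|\ge\delta$, on which the chord is bounded below by $\delta$ so the kernel is bounded on a bounded set and the operator is Hilbert-Schmidt, hence compact. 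Lemma \ref{lemB2} then gives compactness of $\opE^{-+}_{M,p}$, and likewise of $\opE^{+-}_{M,p}$; adding the four compact pieces yields compactness of $\opA_{M,p}-\opA_{M,0}$. I expect the cross-quadrant estimate to be the main obstacle: the real content is to extract the scalar factor $\|p'\|_\infty$ cleanly from the difference of two genuinely singular Cauchy kernels and to recognize that, read across the corner, the otherwise non-Hilbert-Schmidt singularity is exactly a Hilbert kernel --- which is where both the boundedness and the explicit constants come from.
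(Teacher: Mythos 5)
Your proposal is correct and follows essentially the same route as the paper: the same graph-parameter isometry, the same four-quadrant splitting of the kernel with the observation that the wedge's Kerzman--Stein kernel vanishes on the same-sign quadrants (reducing $\opE^{--}_{M,p}$, $\opE^{++}_{M,p}$ to Lanzani's compactness theorem), the same telescoping of the cross-quadrant kernel difference estimated against the Hilbert kernel $(|x|+|y|)^{-1}$ with norm $\pi$, and the same near-corner/far-corner split combined with Lemma~\ref{lemB2} to upgrade the norm bound to compactness. No substantive differences to report.
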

\begin{proof}
Following the notation in Lanzani~\cite{lanzani}, let  $\phi(x) = M|x| + p(x)$ and 
\[ h(x)= (1+\phi'(x)^2)^{1/2}. \] 
The isometry $\Lambda : L^2(\Gamma_{M,p}) \rar L^2( [-a,+a])$ given by
\[
(\Lambda f) (x) = f(x + i\phi(x))\, h(x)^{1/2}
\]
enables us to express the Kerzman-Stein operator for $\Gamma_{M,p}$ as an integral operator
on $L^2([-a,+a])$ with kernel
\begin{align*}
A_{M,p}&(x,y) \\  &=  \frac{1}{2\pi i} \frac{1}{h(x)^{1/2} h(y)^{1/2}} 
\left(
 \frac{h(x)[1+i\phi'(y)] }{(y-x) + i[\phi(y) - \phi(x)]} -
 \frac{h(y)[1-i\phi'(x)] }{(y-x) - i[\phi(y) - \phi(x)]} 
 \right) \! .
\end{align*}
By letting $p=0$ in the previous sentences, we likewise express the Kerzman-Stein operator for $\Gamma_{M,0}$ as an integral operator with kernel $A_{M,0} (x,y)$. 

The operators $\opE_{M,p}^{\pm\pm}$ described in the proposition are then defined in terms of their kernels,
\begin{align*}
E^{--}_{M,p}(x,y) = \left(   A_{M,p} (x,y)-A_{M,0} (x,y)    \right)  \chi_{(-a,0)}(x)  \chi_{(-a,0)}(y) \\
E^{-+}_{M,p}(x,y) = \left(   A_{M,p} (x,y)-A_{M,0} (x,y)    \right)  \chi_{(-a,0)}(x)  \chi_{(0,+a)}(y) \\
E^{+-}_{M,p}(x,y) = \left(   A_{M,p} (x,y)-A_{M,0} (x,y)    \right)  \chi_{(0,+a)}(x)  \chi_{(-a,0)}(y) \\
E^{++}_{M,p}(x,y) = \left(   A_{M,p} (x,y)-A_{M,0} (x,y)    \right)  \chi_{(0,+a)}(x)  \chi_{(0,+a)}(y) . \!
\end{align*}
The notation is meant to reflect, for example, that $\opE^{+-}_{M,p}$ takes a function supported on the half-interval $(-a,0)$ to a function supported on the half-interval $(0,+a)$.

In case that $x,y$ are both negative or both positive, then $z_0(x)$ and $z_0(y)$ trace the same line segment and so $A_{M,0}(x,y) = 0$.   This reflects the observation that the Kerzman-Stein operator vanishes for a disc or halfplane.
It follows that $\opE^{--}_{M,p}$ and $\opE^{++}_{M,p}$ act exactly as the Kerzman-Stein operators restricted to $\Gamma_{M,p}^- = \Gamma_{M,p} \cap \{\Real z <0\}$ and $\Gamma_{M,p}^+= \Gamma_{M,p} \cap \{\Real z >0\}$, respectively.
Since $\Gamma^-_{M,p}$ and $\Gamma^+_{M,p}$ are continuously differentiable, it follows that $\opE^{--}_{M,p}$ and $\opE^{++}_{M,p}$ are compact.  So the first claim in the proposition is established.

For the remaining claims, a simple check of the kernels reveals $(\opE^{+-}_{M,p})^* = - \opE^{-+}_{M,p}$.  So it suffices to verify \eqref{eqA01}.   Then \eqref{eqA02} follows immediately.

We proceed to express the kernel $E_{M,p}^{-+}$ as a sum of manageable parts.   
The kernel vanishes except for $x\in(-a , 0)$, $y \in(0, +a)$, and in this region we have
\begin{align*}
2\pi i \,E^{-+}_{M,p}(x,y) &= 
\left(
 \frac{h(x)^{1/2}}{h(y)^{1/2}} \frac{1+i\phi'(y) }{(y-x) + i(\phi(y) - \phi(x))} -
\frac{1+iM}{(y-x) + iM(y+x)}
 \right) \\
 & -\left(
 \frac{h(y)^{1/2}}{h(x)^{1/2}} \frac{1-i\phi'(x) }{(y-x) - i(\phi(y) - \phi(x))} -
\frac{1+iM}{(y-x) - iM(y+x)}
 \right)
 \!.
 \end{align*}
We expand $2\pi i \, E^{-+}_{M,p} = K_1  +K_2  - K_3   -K_4$ where
\begin{align*}
K_1(x,y)   &= \left(   \frac{h(x)^{1/2}}{h(y)^{1/2}}[1+i\phi'(y)] -   (1+iM)\right)  \frac{1}{(y-x) + i (\phi(y)-\phi(x))} \\
K_2(x,y)   &=    (1+iM) \left( \frac{1}{(y-x) + i(\phi(y) - \phi(x))} - \frac{1}{(y-x) + i M(y+x)}\right)\\
K_3(x,y)   &= 
 \left(   \frac{h(y)^{1/2}}{h(x)^{1/2}}[1-i\phi'(x)] -   (1+iM)\right)  \frac{1}{(y-x) - i (\phi(y)-\phi(x))} \\
K_4(x,y)   &=  
   (1+iM) \left( \frac{1}{(y-x) - i(\phi(y) - \phi(x))} - \frac{1}{(y-x) - i M(y+x)}\right)
\end{align*}
Of course, these expressions hold only for $x\in(-a,0)$, $y\in(0,+a)$.  Outside this region, the kernels are zero.  We use $\opK_j$ to represent the integral operator associated to $K_j$, so $\opK_j f(x) = \int_0^a K_j(x,y)f(y)\, dy$ for $x\in (-a,0)$.

\medskip \noindent
{\underline{Estimate for $K_1$}:}   
First, an application of the Mean Value Theorem gives the existence of a value $c$ between $p'(x)$ and $-p'(y)$ such that
\begin{align*}
 h(x)^{1/2} - h(y)^{1/2}  &=   (1+ (M-p'(x))^2)^{1/4} - (1+ (M+p'(y))^2)^{1/4}  \\
 &=  \frac{-(M - c)}{2\,(1+ (M-c)^2)^{3/4}}  \cdot (p'(x)+p'(y)).
\end{align*}
From this it follows that $|h(x)^{1/2} - h(y)^{1/2}| \leq \| p' \|_\infty$.  By further using the expansion
\[
 \frac{h(x)^{1/2}}{h(y)^{1/2}}[1+i\phi'(y)] -   (1+iM) 
  =  \left( h(x)^{1/2} - h(y)^{1/2}\right) \frac{1+i\phi'(y) }{h(y)^{1/2}} + i \left(\phi'(y) -M \right)
\]
we may estimate
\begin{align*}
\left|  \frac{h(x)^{1/2}}{h(y)^{1/2}}[1+i\phi'(y)] -   (1+iM) \right| 
& \leq \| p'\|_\infty \cdot h(y)^{1/2} + |p'(y)| \\
& \leq  2\,(1+(|M|+\|p'\|_\infty)^2)^{1/4} \,  \| p' \|_\infty.
\end{align*}
Since $x < 0$, $y>0$, it then follows that 
\[
| K_1(x,y) |  \leq 2 \,(1+(|M|+\|p'\|_\infty)^2)^{1/4}  \,  \| p' \|_\infty \cdot \frac{1}{y-x},
\]
and so for $x\in(-a,0)$, 
\begin{align*}
|\opK_1 f(x)|  &\leq  \int_0^{+a} |K_1(x,y)| \, |f(y)| \, dy  \\
& \leq 2 \,(1+(|M|+\|p'\|_\infty)^2)^{1/4}  \,  \| p' \|_\infty \cdot \int_0^{+a} \frac{|f(y)| }{y-x} \, dt\\[.035in]
& =  2\, (1+(|M|+\|p'\|_\infty)^2)^{1/4}  \,  \| p' \|_\infty \cdot \pi \, \opH(\chi_{(0,+a)} \, |f|)(x)
\end{align*}
where $\opH$ is the Hilbert transform for the real line.  The Hilbert transform is bounded and has norm 1; the same holds for multiplication by $\chi_{(0,+a)}$.  
It follows that $\opK_1$ is bounded, and
\begin{equation}
\| \opK_1 \| \leq 2\, (1+(|M|+\|p'\|_\infty)^2)^{1/4}  \,  \| p' \|_\infty \cdot \pi.
\label{eqK1}
\end{equation}

\medskip\noindent{\underline{Estimate for $K_2$}:} 
Proceeding as before, we make the initial estimate
\begin{align*}
| K_2(x,y) | &= (1+M^2)^{1/2} \cdot 
 \frac{|p(y)- p(x)|}{    | (y-x) + i(\phi(y) - \phi(x))|} \cdot \frac{1}{ |(y-x) + i M(y+x)|} \\
&\leq (1+M^2)^{1/2} \cdot   \| p' \|_\infty \cdot   \frac{1}{y-x}.
\end{align*} 
Using the very same reasoning as before, we obtain
\begin{equation}
\| \opK_2 \| \leq(1+M^2)^{1/2} \cdot   \| p' \|_\infty \cdot \pi.
\label{eqK2}
\end{equation}

\medskip\noindent{\underline{Estimates for $K_3$, $K_4$}:}   The estimates for $K_3$ and $K_4$ follow the same reasoning as for $K_1$ and $K_2$ and lead to
\begin{align}
\| \opK_3 \| & \leq 2\, (1+(|M|+\|p'\|_\infty)^2)^{1/4}  \,  \| p' \|_\infty \cdot \pi
\label{eqK3}
\\
\| \opK_4 \| &\leq (1+M^2)^{1/2} \cdot   \| p' \|_\infty \cdot \pi.
\label{eqK4}
\end{align}

\medskip\noindent
Since $ \opE^{-+}_{M,p} = (\opK_1  +\opK_2  -\opK_3   -\opK_4)/(2\pi i)$, we then conclude from \eqref{eqK1}, \eqref{eqK2}, \eqref{eqK3}, \eqref{eqK4} that
\begin{align*}
\| \opE^{-+}_{M,p} \| &\leq \|p'\|_\infty \left(
2 (1+(|M|+\| p' \|_\infty)^2)^{1/4} + (1+M^2)^{1/2}
\right) \\
&\leq \|p'\|_\infty \, 3 \,(1+(|M|+\| p' \|_\infty)^2)^{1/2} 
\end{align*}
This establishes \eqref{eqA01}, and as mentioned, 
\eqref{eqA02} follows immediately.

\medskip
We have left to verify that $\opA_{M,p} - \opA_{M,0} = 
 \opE^{--}_{M,p} + \opE^{-+}_{M,p}+\opE^{+-}_{M,p}+ \opE^{++}_{M,p}$ is compact, and based on the preceding, it will be enough to demonstrate that $\opE_{M,p}^{-+}$ is compact.
We do this using Lemma~\ref{lemB2}.  

Take $\eps>0$.  Given that $M\neq 0$ and $p\in C^1([-a,+a])$ are fixed, and $p(0) = p'(0)=  0$, there exists $a'>0$ such that
\[
|p'(x)| \, 3 \,(1+(|M|+| p'(x) |)^2)^{1/2} < \eps
\] 
for $x\in [-a',+a']$.
From what already has been established, it follows that if $\opL_1$ is the integral operator on $[-a,+a]$ that has kernel
\[
L_1 (x,y) = E_{M,p}^{-+} (x,y) \chi_{(-a',+a')}(x) \chi_{(-a',+a')}(y),
\] 
then $\| \opL_1 \| < \eps$.  It remains to be seen that $\opL_2 \eqdef \opE_{M,p}^{-+} - \opL_1$ is compact. 

Recalling that already $E^{-+}_{M,p} (x,y)$ vanishes except when both $x<0$ and $y>0$, it follows that 
\[
L_2(x,y) = (A_{M,p}(x,y) - A_{M,0}(x,y))
[  \chi_{(-a,0)}(x)  \chi_{(0,+a)}(y)
-
  \chi_{(-a',0)}(x)  \chi_{(0,+a')}(y)
].
\]
In particular, $|x-y|>2a'$ when $L_2(x,y) \neq 0$, and a simple estimate shows 
\[
\int_{-a}^{+a}\int_{-a}^{+a}  |L_2(x,y)|^2 \, dx \, dy < \frac{a^2}{2\pi^2} 
 \left(\frac{  1 + (|M|+\|p'\|_\infty)^2)}{(2a')^2} +  \frac{1+M^2}{(2a')^2}\right)
\] 
which is finite.  So $\opL_2$ is Hilbert-Schmidt and therefore compact.
\end{proof}

\subsection{Proof that the remaining $\opA_{j,k}$ are compact.}  We have left to verify that the remaining pieces $\opA_{j,k}$ are compact.  For this we use the Lanzani result that the Kerzman-Stein operator for a continuously differentiable region is compact, and the fact that a Hilbert-Schmidt integral operator is compact.

\begin{proof}[Proof that $\opA_{0,0}$ is compact.]  To  see that $\opA_{0,0}$ is compact on $L^2(\bOm)$ we consider 
a smoothing of the corners of $\Om$ to get a continuously differentiable region $\widetilde\Om$ 
for which $\bOm \cap \supp \chi_0 = b\widetilde\Om \cap \supp \chi_0$.  Lanzani showed that the Kerzman-Stein operator $\widetilde\opA$ for $\widetilde\Om$ is compact on $L^2(b\widetilde\Om)$. 
Due to the  cutoff functions, we write without risk of ambiguity, 
$\opA_{0,0} = \opM_0 \circ \widetilde \opA \circ \opM_0$ 
where $\opM_0$ denotes multiplication by $\chi_0$ as earlier.  Since $\widetilde\opA$ is compact and $\chi_0$ is bounded, it follows that $\opA_{0,0}$ is compact.
\end{proof}

\begin{proof}[Proof that $\opA_{0,k}$ (for $k>0$) is compact.] 
Let $\bOm$ be expressed locally as a graph at $w_k$ as in the earlier paragraph.
 We write $\chi_k = \chi_k^- + \chi_k^+$ where $\chi_k^-$ takes value 1 on the part of $\bOm \cap \ov U_k $ for which $x\leq 0$ and where $\chi_k^+$ takes value 1 on the part for which $x> 0$. 
(Both $\chi_k^-$ and $\chi_k^+$ take value 0 outside $\ov U_k$.)
As in the previous paragraph, we smooth the corners $w_j$ of $\bOm$, $j\neq k$, and suitably extend $\bOm$ at $w_k$ so as to obtain continuously differentiable regions $\widetilde\Om^\pm$ for which 
$
\bOm \cap \supp (\chi_0 + \chi_k^\pm)  = b\widetilde\Om^\pm \cap \supp (\chi_0 + \chi_k^\pm).
$
Letting $\widetilde\opA^\pm$ be the Kerzman-Stein operator for $\widetilde\Om^\pm$ and $\opM^\pm_k$ be multiplication by $\chi_k^\pm$, we may write  without risk of ambiguity, $\opA_{0,k} =  (\opM_0 \circ \widetilde\opA^+ \circ \opM_k^+)  
  + (\opM_0 \circ \widetilde \opA^- \circ \opM_k^-)$.  So then $\opA_{0,k}$ is compact.
\end{proof}

\begin{proof}[Proof that $\opA_{j,0}$ (for $j>0$) is compact.]
The argument uses the same ideas as in the previous paragraph.  Following the notation described there, we write 
$\opA_{j,0} =  (\opM_j^+ \circ \widetilde \opA^+ \circ \opM_0)  
  + (\opM_j^- \circ \widetilde \opA^- \circ \opM_0)$.  So then $\opA_{j,0}$ is compact.
\end{proof}

\begin{proof}[Proof that $\opA_{j,k}$ (for $j, k>0, j\neq k$) is compact.] 
Since  $\ov U_j$ and $\ov U_k$ are nonintersecting and the supports of $\chi_j$ and $\chi_k$ are contained in $\bOm\cap \ov U_j$ and $\bOm \cap \ov U_k$, respectively, it follows that there is a positive constant $\delta_{j,k}$ for which $z\in \supp \chi_j$, $w\in \supp \chi_k$  implies $|z-w| > \delta_{j,k}$. A simple estimate shows
\[
\iint_{b\Om\times b\Om} |A_{j,k}(z,w) |^2 \, ds_z ds_w < \frac{1}{4\pi^2} \, \frac{4}{\delta_{j,k}^2} \, \mathrm{length}(b\Om)^2 < \infty
\]
so that $\opA_{j,k}$ is Hilbert-Schmidt and therefore compact.
\end{proof}

%
%
\section{Change in eigenvalues under continuously differentiable perturbation}

In this section we demonstrate how the spectrum of the Kerzman-Stein operator changes with respect to a local continuously differentiable perturbation.  We restrict our attention to curves that are continuously differentiable, so following Lanzani's result, the Kerzman-Stein operator is compact and has only point spectrum.  Naturally, the result can be extended to curves that are piecewise continuously differentiable provided the perturbations preserve the angles at the corners.

The result is modeled on the proof that Lanzani gave for compactness~\cite{lanzani}.

\begin{prop}
Let  $p_1, p_2$ be continuously differentiable functions on an interval $I$ for which $\| p'_j \|_\infty < 1$.  If  $\Gamma_j$ is the graph $z(x) = x + i p_j(x)$, $x\in I$, and if $\opA_j$ is the Kerzman-Stein operator for $\Gamma_j$, $j=1,2$, then when expressed as integral operators in terms of the graph parameter, $\| \opA_1 - \opA_2 \| \leq C \| (p_1 -p_2)' \|_\infty$.
\label{propB2}
\end{prop}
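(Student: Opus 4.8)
The plan is to follow the structure of Proposition~\ref{propB3} essentially verbatim, replacing the pair $(\Gamma_{M,p},\Gamma_{M,0})$ by $(\Gamma_1,\Gamma_2)$ and the role of $M|x|$ by the smooth profile, say, $p_2(x)$. First I would introduce the arc-length isometries $\Lambda_j : L^2(\Gamma_j)\to L^2(I)$, $(\Lambda_j f)(x) = f(x+ip_j(x))\,h_j(x)^{1/2}$ with $h_j(x) = (1+p_j'(x)^2)^{1/2}$, and write $\opA_j$ as an integral operator on $L^2(I)$ with kernel
\[
A_j(x,y) = \frac{1}{2\pi i}\,\frac{1}{h_j(x)^{1/2}h_j(y)^{1/2}}\left(\frac{h_j(x)[1+ip_j'(y)]}{(y-x)+i[p_j(y)-p_j(x)]} - \frac{h_j(y)[1-ip_j'(x)]}{(y-x)-i[p_j(y)-p_j(x)]}\right).
\]
Then $\opA_1 - \opA_2$ has kernel $A_1 - A_2$, and the goal reduces to the pointwise bound $|A_1(x,y) - A_2(x,y)| \le C\,\|(p_1-p_2)'\|_\infty\cdot \frac{1}{|y-x|}$, after which the Hilbert-transform argument from Proposition~\ref{propB3} (boundedness of $\opH$ with norm one) immediately yields $\|\opA_1 - \opA_2\| \le \pi C\,\|(p_1-p_2)'\|_\infty$. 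Note that here, in contrast to Proposition~\ref{propB3}, there is no need to split into $\pm\pm$ pieces: since both graphs are genuinely continuously differentiable (not merely Lipschitz), the diagonal terms do not vanish but they are no more singular than $1/|y-x|$, so a single global estimate suffices and no compactness bookkeeping is required.

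The key mechanical step is to decompose $A_1 - A_2$ into a sum of terms, each isolating one source of discrepancy, exactly as $K_1,\dots,K_4$ did before. Writing $\psi = p_1 - p_2$, I would split the holomorphic-type term (the first summand) of $A_1 - A_2$ as: (a) a term where the numerators $h_1(x)^{1/2}h_1(y)^{-1/2}[1+ip_1'(y)]$ and $h_2(x)^{1/2}h_2(y)^{-1/2}[1+ip_2'(y)]$ differ while the denominator is $(y-x)+i[p_1(y)-p_1(x)]$; plus (b) a term with common numerator $h_2(x)^{1/2}h_2(y)^{-1/2}[1+ip_2'(y)]$ and denominators $(y-x)+i[p_1(y)-p_1(x)]$ versus $(y-x)+i[p_2(y)-p_2(x)]$, and the mirror decomposition for the antiholomorphic-type term. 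For the numerator differences one uses, as in the estimate for $K_1$, the Mean Value Theorem on $t\mapsto(1+t^2)^{1/4}$ together with $|h_1^{1/2} - h_2^{1/2}| \le \tfrac12\|\psi'\|_\infty$ (using $\|p_j'\|_\infty<1$ to control the Jacobian factor $(M-c)/(1+(M-c)^2)^{3/4}$ uniformly) and $|p_1'(y) - p_2'(y)| \le \|\psi'\|_\infty$; for the denominator differences one uses, as in the estimate for $K_2$, that the difference of reciprocals is $\frac{i([p_2(y)-p_2(x)]-[p_1(y)-p_1(x)])}{[(y-x)+i\Delta_1][(y-x)+i\Delta_2]}$ and that $|[p_1(y)-p_1(x)] - [p_2(y)-p_2(x)]| = |\psi(y)-\psi(x)| \le \|\psi'\|_\infty|y-x|$, while each denominator modulus is $\ge |y-x|$. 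Collecting these, every piece is bounded by $(\text{const depending only on }\|p_1'\|_\infty,\|p_2'\|_\infty)\cdot\|\psi'\|_\infty\cdot\frac{1}{|y-x|}$.

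The main obstacle — really the only place requiring care — is making the constant $C$ depend \emph{only} on the a priori bound $\|p_j'\|_\infty<1$ and not on finer data of $p_1,p_2$; this is what the hypothesis $\|p_j'\|_\infty<1$ is for, since it keeps all the Jacobian factors $h_j$, $h_j^{1/2}$, $h_j^{-1/2}$, and the intermediate-value factors in the MVT estimates bounded above and below by absolute constants (e.g. $1\le h_j\le\sqrt2$). One subtlety worth flagging: if $I$ is unbounded, the Hilbert-transform bound still applies directly; if $I$ is a bounded interval one simply extends the integration to $\R$ by zero before invoking $\|\opH\|=1$, exactly as in Proposition~\ref{propB3}. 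Once the pointwise kernel bound is in hand, the conclusion $\|\opA_1 - \opA_2\| \le C\|(p_1-p_2)'\|_\infty$ follows with $C$ an explicit multiple of $\pi$, completing the proof. Standard perturbation theory for the spectra of compact self-adjoint (here skew-adjoint) operators then gives the promised stability of eigenvalues, though that is a corollary rather than part of the proposition statement itself.
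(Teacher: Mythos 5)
Your setup --- the isometries $\Lambda_j$, the kernel formula for $A_j(x,y)$, and the decomposition of $A_1-A_2$ into pieces isolating the numerator and denominator discrepancies --- matches the paper's, and your pointwise estimates on the individual factors (the MVT bound $|h_1^{1/2}-h_2^{1/2}|\le \tfrac12\|(p_1-p_2)'\|_\infty$, the bound $|\psi(y)-\psi(x)|\le\|\psi'\|_\infty|y-x|$ for the difference of reciprocals) are all correct. The fatal gap is the final step: reducing everything to the pointwise bound $|A_1(x,y)-A_2(x,y)|\le C\|(p_1-p_2)'\|_\infty/|y-x|$ and then invoking ``the Hilbert transform argument from Proposition~\ref{propB3}.'' That argument was available in Proposition~\ref{propB3} only because the kernel $E^{-+}_{M,p}$ was supported on $x<0<y$: there $y-x>0$ is bounded below by $|x|+|y|$, the kernel $1/(y-x)$ is positive and nonsingular on the support, and one can dominate $|\opK_1 f(x)|$ by $\pi\,\opH(\chi_{(0,+a)}|f|)(x)$. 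In the present proposition $x$ and $y$ range over the \emph{same} interval, the singularity sits on the diagonal, and a kernel merely bounded in absolute value by $C/|x-y|$ gives no $L^2$ estimate at all: $\int_{x-\eps}^{x+\eps}|x-y|^{-1}\,dy=\infty$, so the operator $f\mapsto\int|f(y)|\,|x-y|^{-1}\,dy$ is not even well defined on $L^2$, let alone bounded. Your remark that ``a single global estimate suffices and no compactness bookkeeping is required'' discards exactly the structure that makes the proof work.

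What is actually needed is cancellation, not size. The pieces of $A_1-A_2$ are genuinely singular integral operators: the leading term has kernel $\frac{(p_1-p_2)(y)-(p_1-p_2)(x)}{(y-x)^2}\,F_1\bigl(\tfrac{p_1(y)-p_1(x)}{y-x}\bigr)F_2\bigl(\tfrac{p_2(y)-p_2(x)}{y-x}\bigr)$ with $F_j(z)=(1+iz)^{-1}$ (a Calder\'on-commutator-type kernel), and the remaining terms contain the Cauchy kernel $[(y-x)+i(p_2(y)-p_2(x))]^{-1}$ of a Lipschitz graph preceded by multiplication by a function of size $\|(p_1-p_2)'\|_\infty$. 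Their $L^2$ boundedness, with operator norm controlled by $\|B'\|_\infty$ where $B=p_1-p_2$ (respectively $B(x)=x$), is the content of the Coifman--McIntosh--Meyer theorem; this is precisely what the paper's Lemma~\ref{thm:CMM extension} supplies and what your proposal omits. To repair the argument you must organize the decomposition so that each piece is of the form $T_B$ in that lemma and cite it (or an equivalent deep $L^2$ bound for the Cauchy integral on Lipschitz curves); no elementary Schur-type or Hilbert-transform domination can substitute for it here.
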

 
We note that it is not necessary for the interval to be bounded, and in fact, the statement holds when $I = (-\infty,+\infty)$.  The result still is a local result, however, because a general curve can be expressed as a graph only locally.

As in the proof of Lanzani's result, the proof of Proposition~\ref{propB2} uses ideas of Coifman, Macintosh and Meyer~\cite{CMM}.  For our application, we use the following simple generalization of their Lemma 4 (in \cite{CMM}).  The statement and proof also follow the presentation of Torchinsky~\cite[Theorem XVI.2.2]{torchinsky} and so we omit the proof.

\begin{lemma}
\label{thm:CMM extension} 
Let $\eta>0$ and $B, \vp_1,\vp_2$ be Lipschitz functions on $\mR$ so that $\|\vp_j'\|_\infty <\eta$, $j=1,2$. Let  $F_1$, $F_2$ be holomorphic functions on $B(0,\eta)\subset\mC$. If $T_B$ is defined on $L^2(\mR)$ by
\[
T_B f(x) = P.V. \int_\mR \frac{B(x)-B(y)}{(x-y)^2}
F_1 \Big(\frac{\vp_1(x)-\vp_1(y)}{x-y}\Big) 
F_2 \Big(\frac{\vp_2(x)-\vp_2(y)}{x-y}\Big) 
f(y)\, dy,
\]
then $T_B$ is well-defined and bounded. Moreover,
$ \|T_B f\| \leq c(\eta,F_1,F_2) \|B'\|_\infty \|f\| $. 
\end{lemma}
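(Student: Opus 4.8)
The plan is to reduce $T_B$ to a convergent series of multilinear Calderón commutators, each controlled by the sharp Coifman, Macintosh and Meyer estimate with polynomial growth in the degree. First I would exploit the analyticity of $F_1,F_2$: since $\|\vp_j'\|_\infty<\eta$, the difference quotients $\frac{\vp_j(x)-\vp_j(y)}{x-y}$ take values in the disk $\{|w|\le\|\vp_j'\|_\infty\}\subset B(0,\eta)$, where the Taylor expansions $F_1(w)=\sum_{m\ge0}a_m w^m$ and $F_2(w)=\sum_{n\ge0}b_n w^n$ converge uniformly. Substituting these into the kernel and (formally, for now) interchanging summation with the principal-value integral writes $T_B=\sum_{m,n\ge0}a_m b_n\,T_{m,n}$, where $T_{m,n}$ is the operator with kernel
\[
\frac{(B(x)-B(y))\,(\vp_1(x)-\vp_1(y))^m\,(\vp_2(x)-\vp_2(y))^n}{(x-y)^{m+n+2}}.
\]

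Second, I would recognize each $T_{m,n}$ as a Calderón commutator of total degree $m+n+1$ assembled from the Lipschitz functions $B$, $\vp_1$ (repeated $m$ times), and $\vp_2$ (repeated $n$ times). The essential analytic input — precisely the generalization of Lemma~4 of \cite{CMM}, and the part I would cite rather than reprove — is the multilinear bound
\[
\|T_{m,n}\|_{L^2\to L^2}\ \le\ c\,(1+m+n)^{N}\,\|B'\|_\infty\,\|\vp_1'\|_\infty^{\,m}\,\|\vp_2'\|_\infty^{\,n}
\]
for a fixed power $N$ independent of $m$ and $n$. The crucial feature is that the dependence on the degree is only polynomial, not factorial; this is exactly what the Coifman--Macintosh--Meyer machinery (in Torchinsky's presentation) delivers, and it is the genuinely deep ingredient.

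Third, I would sum the series. The bound is linear in $B$, producing the factor $\|B'\|_\infty$ out front, so it remains to control $\sum_{m,n}|a_m||b_n|(1+m+n)^N\|\vp_1'\|_\infty^m\|\vp_2'\|_\infty^n$. Choosing a radius $r$ with $\max(\|\vp_1'\|_\infty,\|\vp_2'\|_\infty)<r<\eta$, Cauchy's estimates applied to $F_1,F_2$ on $B(0,r)$ give $|a_m|\le C_r\,r^{-m}$ and $|b_n|\le C_r\,r^{-n}$, so the general term is dominated by $(1+m+n)^N(\|\vp_1'\|_\infty/r)^m(\|\vp_2'\|_\infty/r)^n$ with both ratios strictly less than $1$. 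A polynomial factor times a geometric series converges, yielding a finite constant (depending on $\eta$, $F_1$, $F_2$, and on how far $\|\vp_j'\|_\infty$ sits below $\eta$) and hence the asserted $\|T_Bf\|\le c(\eta,F_1,F_2)\|B'\|_\infty\|f\|$. This same absolute convergence, combined with the standard truncation argument for each commutator, retroactively justifies the interchange of sum and principal value and establishes that $T_B$ is well defined.

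I expect the main obstacle to be the polynomial-in-degree commutator estimate of the second step: proving it from scratch requires the full Coifman--Macintosh--Meyer apparatus — Calderón--Zygmund theory together with the delicate combinatorial bookkeeping that keeps the operator norm polynomial in the number of factors. Since the lemma is an essentially routine extension of their Lemma~4 and follows Torchinsky verbatim, in the final write-up I would invoke that estimate directly; the only genuinely new bookkeeping is carrying along the extra linear factor $B$ and the two independent difference quotients, which alters nothing essential in the underlying argument.
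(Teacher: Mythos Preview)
Your proposal is correct and follows exactly the standard Coifman--Macintosh--Meyer/Torchinsky route that the paper itself points to. In fact the paper omits the proof entirely, noting only that the statement and proof are a simple generalization of \cite[Lemma~4]{CMM} and follow the presentation of \cite[Theorem~XVI.2.2]{torchinsky}; your sketch --- Taylor-expand $F_1,F_2$, recognize each term as a multilinear Calder\'on commutator, invoke the polynomial-in-degree $L^2$ bound, and sum the resulting geometric series --- is precisely that argument made explicit.
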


\begin{proof}[Proof of Proposition~\ref{propB2}]
Following Lanzani's notation, define $h_j(x) = (1+ p_j'(x)^2)^{1/2}$.  
The isometry $\Lambda_j: L^2(\Gamma_j) \rar L^2(I)$ given by
\[
(\Lambda_j f)(x) = f(x + ip_j(x)) \, h_j(x)^{1/2}
\]
enables us to express $\opA_j$ as an integral operator on $L^2(I)$ with kernel 
\begin{align*}
A_j&(x,y) \\  &=  \frac{1}{2\pi i} \frac{1}{h_j(x)^{1/2} h_j(y)^{1/2}} 
\left(
 \frac{h_j(x)[1+i p_j'(y)] }{(y-x) + i[p_j(y) - p_j(x)]} -
 \frac{h_j(y)[1-i p_j'(x)] }{(y-x) - i[p_j(y) - p_j(x)]} 
 \right) \! .
\end{align*}
We then expand 
\begin{align*}
2\pi i[A_1(x,y) - A_2(x,y)] = & \, E_1(x,y) + E_2(x,y) + E_3(x,y) \\
   & +E_4(x,y) +E_5(x,y) +E_6(x,y)
\end{align*}
where 
\begin{align*}
E_1(x,y) & = h_1(x)^{1/2}\,\frac{1 +ip_1'(y)}{h_1(y)^{1/2}} \,
\frac{1}{y-x} \cdot  \\ 
& \hspace{.8in}\left[
\left(1+ i\, \ds \frac{p_1(y) - p_1(x)}{y-x} \right)^{-1} 
- \left(1+ i\, \ds  \frac{p_2(y) - p_2(x)}{y-x}\right)^{-1}  \right] \\
E_2(x,y) & =  
 \, h_1(x)^{1/2} \left[ \frac{1+i p_1'(y)}{h_1(y)^{1/2}}  
-\frac{1+i p_2'(y)}{h_2(y)^{1/2}}  
 \right]
\, \frac{1}{y-x} \left( 1+ i\, \ds \frac{p_2(y)-p_2(x)}{y-x}\right)^{-1} \\
E_3(x,y) & = 
\left[
h_1(x)^{1/2} - h_2(x)^{1/2} \right] 
\frac{1+i p_2'(y)}{h_2(y)^{1/2}}  
\, \frac{1}{y-x}  \left( 1+ i\, \ds \frac{p_2(y)-p_2(x)}{y-x}\right)^{-1}
\end{align*}
and where $E_4(x,y) = \ov{E_1(y,x)}$, $E_5(x,y) = \ov{E_2(y,x)}$ and $E_6(x,y) = \ov{E_3(y,x)}$.
To prove Proposition~\ref{propB2} it therefore suffices to show 
\begin{align*}
\| \opE_1\| &\leq C_1 \| (p_1 - p_2)' \|_\infty\\[.02in]
\| \opE_2\| &\leq C_2 \| (p_1 - p_2)' \|_\infty \\[.02in]
\| \opE_3\| &\leq C_3 \| (p_1 - p_2)' \|_\infty 
\end{align*}
where $\opE_1, \opE_2, \opE_3$ are the integral operators associated to $E_1, E_2, E_3$, respectively.

After writing 
\begin{align*}
E_1(x,y) & = 
h_1(x)^{1/2}\,\frac{1 +ip_1'(y)}{h_1(y)^{1/2}} 
 \frac{ (p_1 - p_2)(y) - (p_1-p_2)(x)}{ i (y-x)^2} 
\left(1+ i \ds \frac{p_1(y) - p_1(x)}{y-x} \right)^{-1}  \\
& \hspace{2.8in} \cdot  \left(1+ i \ds  \frac{p_2(y) - p_2(x)}{y-x}\right)^{-1}  
\end{align*}
we obtain $ \| \opE_1\| < C_1 \| (p_1 - p_2)' \|_\infty$ from an application of Lemma~\ref{thm:CMM extension} that uses 
\begin{align*}
B(x) \, &= (p_1-p_2)(x)\\
 \vp_1(x) &= p_1(x)\\
 \vp_2(x) &= p_2(x)\\
F_1(z) = F_2(z) &= (1+iz)^{-1}.
\end{align*}
We also use the fact that multiplication by $h_1(x)^{1/2}$ and by $(1+ip_1'(y))/h_1(y)^{1/2}$ are both operators with norm less than $\sqrt 2$ (since $\| p_1'\|_\infty<1$) and that multiplication by a characteristic function (in this case for interval $I$) is an operator with norm 1.

For $E_2$, we first make a preliminary estimate
\[
\left| 
\frac{1+i\alpha}{\sqrt[4]{1+\alpha^2}}
-
\frac{1+i\beta}{\sqrt[4]{1+\beta^2}} 
\right|
\leq \sqrt 2 \, |\alpha - \beta| 
\]
which can be justified using calculus applied separately to real and imaginary parts.
If we define
\[
g(y)  = 
\frac{1+ip_1'(y)}{h_1(y)^{1/2}}  
-
\frac{1+ip_2'(y)}{h_2(y)^{1/2}}, 
\]
then $|g(y) | \leq \sqrt 2 \, |(p'_1-p'_2)(y)| \leq \| (p_1 - p_2)'\|_\infty$.
With this observation, we have $\|\opE_2\| < c \|(p_1 - p_2)' \|_\infty$ after an application of 
Lemma~\ref{thm:CMM extension}
that uses 
\begin{align*}
B(x) \, &= x\\
 \vp_2(x) &= p_2(x)\\
F_1(z) &= 1 \\ 
F_2(z) &= (1+iz)^{-1}.
\end{align*}
The factor $\|(p_1 - p_2)' \|_\infty$ arises from the initial multiplication by $g(y)$.  (The argument again also uses the fact that multiplication by $h_1(x)^{1/2}$ is an operator with bounded norm.)

The reasoning for $\opE_3$ is similar to the reasoning for $\opE_2$ except we use  
$|\sqrt[4]{1+\alpha^2}  - \sqrt[4]{1+\beta^2}| \leq |\alpha-\beta|$
in order to show $|h_1(x)^{1/2} - h_2(x)^{1/2}| \leq \| (p_1-p_2)'\|_\infty$.
\end{proof}

%
%

\section{Final Remarks}

In closing, we reiterate the observation that our analysis of the spectrum of the Kerzman-Stein operator suggests that the global geometry of a curve is reflected more in the point spectrum, and the local geometry (corners, etc.) is reflected more in the continuous spectrum.  It would be interesting to pursue this question further.  In particular, it would be interesting to have a complete description of the spectrum for a triangle; i.e., does the Kerzman-Stein operator then have any eigenvalues?

As well, we wonder if Theorem 2 can be shown to be true for regions with boundaries that are Lipschitz but not piecewise continuously differentiable?  Also, is it possible to extend Theorems 1 and 2 to allow for the case of a cusp?  The authors intend to return to these problems in subsequent work. 

\bibliographystyle{plain}
\bibliography{refs.bib}

\end{document}